\documentclass[a4paper,10pt]{amsart}
\usepackage[shortlabels]{enumitem}
\usepackage{amsthm,amsmath,amssymb,graphics,graphicx,hyperref,epstopdf,mathrsfs}
\usepackage{mathtools}
\usepackage{breqn}
\usepackage{tikz}
\usetikzlibrary{shapes.geometric}

\usetikzlibrary{calc,intersections,through}
  \usepackage{capt-of}

\title{Characterizing Steiner systems via Betti numbers of monomial ideals}
\author{Guillermo Alesandroni, Christopher Chin, Noah Ripke}
\address{}
\email{}

\newtheorem{theorem}{Theorem}[section]
\newtheorem{proposition}[theorem]{Proposition}
\newtheorem{corollary}[theorem]{Corollary}
\newtheorem{lemma}[theorem]{Lemma}

\newtheorem{conjecture}[theorem]{Conjecture}

\theoremstyle{definition}
\newtheorem{definition}[theorem]{Definition}
\newtheorem{remark}[theorem]{Remark}

\newtheorem{construction}[theorem]{Construction}

\DeclareMathOperator{\betti}{b}
\DeclareMathOperator{\pd}{pd}
\DeclareMathOperator{\mdeg}{mdeg}

\DeclareMathOperator{\lcm}{lcm}

\DeclareMathOperator{\hdeg}{hdeg}

\DeclareMathOperator{\rank}{rank}

\DeclarePairedDelimiter{\abs}{\lvert}{\rvert}

\begin{document}
\maketitle
\begin{abstract}
Given a point set $[n]$ and a collection $B$ of $k$-subsets of $[n]$, we create a monomial ideal whose Betti numbers completely determine whether the pair $([n],B)$ is a Steiner system $S(t,k,n)$. In particular, our construction characterizes Steiner triple systems and Steiner quadruple systems solely on the basis of Betti numbers of monomial ideals. Moreover, these monomial ideals allow us to reformulate the renowned prime power conjecture as follows:  If a monomial ideal $M$ of $S = K[x_1, \dots, x_{q^2+q+1}]$ generated by squarefree monomials of degree $q+1$ has Betti numbers $\betti_s(S/M) = {{q^2+q+1} \choose s} \text{ for all } s=0,\ldots,q$
then $q$ is a prime power.
\end{abstract}

\section{Preliminaries}
Throughout, the set of integers $\{1,\ldots,n\}$ will be denoted $[n]$. An element of $[n]$ will be called a \textbf{point} and a $k$-subset of $[n]$ will be called a \textbf{$k$-block}.

\begin{definition}
A \textbf{Steiner system} $S(t,k,n)$ is a pair $([n],B)$ where $B$ is a collection of $k$-blocks such that every $t$-block is contained in a unique $k$-block in $B$.
\end{definition}

Note that the number of $t$-blocks of $[n]$ is $n\choose t$, while the number of $t$-blocks of a $k$-block is $k\choose t$. In addition, if $([n],B)$ is an $S(t,k,n)$, then every $t$-block is contained in exactly one $k$-block in $B$ and thus, ${n\choose t}={k\choose t} \abs B$ or $\abs B={n\choose t}/{k\choose t}$.\\

$S(2,3,n)$ are called Steiner triple systems of order $n$, denoted $\mathrm{STS}(n)$. These are the most emblematic of all Steiner systems and have been studied in depth since 1850 when Thomas Kirkman proposed the now famous Kirkman's schoolgirl problem [Sp]. \newline \newline
Simple instances of Steiner triple systems are the projective plane of order 2 (the Fano plane) which is an $\mathrm{STS}(7)$, and the affine plane of order 3, which is an $\mathrm{STS}(9)$. Other systems of interest are the $S(2,q+1,q^2+q+1)$ because they are equivalent to projective planes of order $q$. In fact, the prime power conjecture can be formulated as follows: an $S(2,q+1,q^2+q+1)$ exists if and only if $q$ is a prime power. Finally, Steiner systems with parameters $t=3$ and $k=4$ are called Steiner quadruple systems, denoted $\mathrm{SQS}(n)$. These systems have also been extensively studied. For more information on Steiner systems, see [LR,VW]. \newline \newline
Now we introduce some standard definitions and constructions in commutative algebra. Let $S_n=K[x_1,\ldots,x_n]$ be a polynomial ring over an arbitrary field $K$ on $n$ variables, and denote by $M$ a squarefree monomial ideal in $S_n$. It follows from the Hilbert basis theorem that $M$ has a finite unique minimal generating set of monomials $m_1,\ldots,m_p$. Because of this fact, we will use the notation $M=(m_1,\ldots,m_p)$, where $m_1, \dots, m_p$ is the minimal generating set.

Now, we introduce the Taylor resolution [Ta], which will play a central role in the proofs of several results. For context and in-depth study, see [Me,Pe].

\begin{construction}
Let $M=(m_1,\ldots,m_p)$. For every subset $\{m_{i_1},\ldots,m_{i_s}\}$ of the minimal generating set, with $1\leq i_1<\ldots<i_s\leq p$, 
we create a formal symbol $[m_{i_1},\ldots,m_{i_s}]$, called a \textbf{Taylor symbol}. The Taylor symbol associated to $\emptyset$ is denoted by $[  ]$.
For each $s=0,\ldots,p$, set $F_s$ equal to the free $S$-module with basis $\{[m_{i_1},\ldots,m_{i_s}]:1\leq i_1<\ldots<i_s\leq p\}$ given by the 
${p\choose s}$ Taylor symbols corresponding to subsets of size $s$. That is, $F_s=\bigoplus\limits_{i_1<\ldots<i_s}S[m_{i_1},\ldots,m_{i_s}]$ 
(note that $F_0=S[ ]$). Define
\[f_0:F_0\rightarrow S/M\]
\[s[ ]\mapsto f_0(s[ ])=s.\]
For $s=1,\ldots,p$, let $f_s:F_s\rightarrow F_{s-1}$ be given by
\[f_s\left([m_{i_1},\ldots,m_{i_s}]\right)=
 \sum\limits_{j=1}^s\dfrac{(-1)^{j+1}\lcm(m_{i_1},\ldots,m_{i_s})}{\lcm(m_{i_1},\ldots,\widehat{m_{i_j}},\ldots,m_{i_s})}
 [m_{i_1},\ldots,\widehat{m_{i_j}},\ldots,m_{i_s}]\]
 and extended by linearity.
 The \textbf{Taylor resolution} $\mathbb{T}_M$ of $S/M$ is the exact sequence
 \[\mathbb{T}_M:0\rightarrow F_p\xrightarrow{f_p}F_{p-1}\rightarrow\cdots\rightarrow F_1\xrightarrow{f_1}F_0\xrightarrow{f_0} 
 S/M\rightarrow0.\]
 \end{construction}
 
 We define the \textbf{multidegree} of a Taylor symbol $[m_{i_1},\ldots,m_{i_s}]$, denoted $\mdeg[m_{i_1},\ldots,m_{i_s}]$, as 
  $\mdeg[m_{i_1},\ldots,m_{i_s}]=\lcm(m_{i_1},\ldots,m_{i_s})$. Similarly, we define the \textbf{degree} of $[m_{i_1},\ldots,m_{i_s}]$, denoted $\deg[m_{i_1},\ldots,m_{i_s}]$, as the degree of its multidegree.
 \\
 
\begin{definition}
Let
\[\mathbb{F}:\cdots\rightarrow F_i\xrightarrow{f_i}F_{i-1}\rightarrow\cdots\rightarrow F_1\xrightarrow{f_1}F_0\xrightarrow{f_0} S/M\rightarrow 0\]
be a free resolution of $S/M$. 
A basis element $[\sigma]$ of $\mathbb{F}$ has \textbf{homological degree} $i$, denoted $\hdeg[\sigma]=i$, if 
$[\sigma] \in F_i$. If all the differential matrices $\left(f_i\right)$ of $\mathbb{F}$ contain only noninvertible entries, then $\mathbb{F}$ is called the \textbf{minimal resolution} of $S/M$.
\end{definition} 
The minimal resolution of $S/M$ is unique up to isomorphism and it can be obtained from $\mathbb{T}_M$ by means of consecutive cancellations. The process of consecutive cancellations is technical and it will not be described in this article. However, the interested reader can find a rigorous treatment in [Al,Pe].

\begin{definition}
Let
 \[\mathbb{F}:\cdots\rightarrow F_i\xrightarrow{f_i}F_{i-1}\rightarrow\cdots\rightarrow F_1\xrightarrow{f_1}F_0\xrightarrow{f_0} S/M\rightarrow 0\]
be the minimal free resolution of $S/M$.
\begin{itemize}
 \item For every $i\geq 0$, the $i^{th}$ \textbf{Betti number} $\betti_i\left(S/M\right)$ of $S/M$ is $\betti_i\left(S/M\right)=\rank(F_i)$.
 \item For every $i,j\geq 0$, the \textbf{graded Betti number} $\betti_{i,j}\left(S/M\right)$ of $S/M$, in homological degree $i$ and internal degree $j$,
is \[\betti_{i,j}\left(S/M\right)=\#\{\text{basis elements }[\sigma]\text{ of }F_i:\deg[\sigma]=j\}.\]
\item For every $i\geq 0$, and every monomial $m$, the \textbf{multigraded Betti number} $\betti_{i,m}\left(S/M\right)$ of $S/M$, in homological degree $i$ and multidegree $m$,
is \[\betti_{i,m}\left(S/M\right)=\#\{\text{basis elements }[\sigma]\text{ of }F_i:\mdeg[\sigma]=m\}.\]
\item The \textbf{projective dimension} $\pd\left(S/M\right)$ of $S/M$ is \[\pd\left(S/M\right)=\max\{i:\betti_i\left(S/M\right)\neq 0\}.\]
\end{itemize}
\end{definition}

\begin{definition}\label{Def 2}
Let $M=(m_1,\ldots,m_p)$ be a squarefree monomial ideal of $S_n$. We will say that $M$ is \textbf{$r$-coverfree} if no minimal generator $m_i$ divides the $\lcm$ of at most $r$ others. (Another common definition of this concept uses ``exactly $r$ others" instead. These definitions are equivalent as long as $p \geq r+1$.)
\end{definition}

For example, $M=(x_1x_2x_5,x_1x_3x_6,x_2x_3x_4,x_4x_5x_6)$ is 2-coverfree but not 3-coverfree, as $x_1x_2x_5\mid \lcm(x_1x_3x_6,x_2x_3x_4,x_4x_5x_6)$. In the case when $M=(m_1,\ldots,m_p)$ is $(p-1)$-coverfree, the definition coincides with the concept of a dominant ideal. It is proven in [Al] that the Taylor resolution of a squarefree monomial ideal is minimal if and only if $M$ is dominant. The above definition is closely related to the next one.

\begin{definition}\label{Def 3}
Let $G=\{m_1,\ldots,m_p\}$ be a collection of sets. We will say that $G$ is \textbf{$r$-coverfree} if no $m_i$ is contained in the union of at most $r$ others.
\end{definition}

For example, $G=\left\{\{x_1,x_2,x_5\},\{x_1,x_3,x_6\},\{x_2,x_3,x_4\},\{x_4,x_5,x_6\}\right\}$ is 2-coverfree but not 3-coverfree, as $\{x_1,x_2,x_5\}\subseteq \{x_1,x_3,x_6\}\cup\{x_2,x_3,x_4\}\cup\{x_4,x_5,x_6\}$.
\newline \newline
Note that if $G$ is $r$-coverfree, it is certainly $s$-coverfree for any $s \leq r$.
\newline \newline
Coverfree collections of sets were first studied equivalently as binary disjunct matrices before being studied as classical objects in combinatorics. For a survey of definitions, constructions, and results related to coverfree collections of sets, we refer to [We].
\newline \newline
The symbols $m_i$ in Definitions \ref{Def 2} and \ref{Def 3} represent monomials and sets, respectively. This choice is intentional because if $M=(m_1,\ldots,m_p)$ and if we replace $M$ with $G$, $m_i$ with the set of variables that appear in its factorization (also denoted $m_i$), and $\lcm$ with union, then we see that $M$ is $r$-coverfree if and only if $G$ is $r$-coverfree. There is another reason for viewing the symbols $m_i$ as monomials as well as sets: in the next definition, we will identify $k$-blocks $\{i_1,\ldots,i_k\}$ of $[n]$ with monomials $x_{i_1}\ldots x_{i_k}$ of $S_n$. To emphasize the analogy between these objects, both will be denoted with the same symbol. 

\begin{definition}\label{Def 4}
Given the pair $([n],B)$, where $B=\{m_1,\ldots,m_p\}$ is an arbitrary collection of $k$-blocks, we define a monomial ideal $M_B$ of $S_n$ as follows: $x_{i_1}\cdots x_{i_k}$ is a minimal generator of $M_B$ if and only if $\{i_1,\ldots,i_k\}$ is a $k$-block in $B$. The ideal $M_B$ will be called the \textbf{block ideal} of $B$. In particular, if $([n],B)$ is a Steiner system, the block ideal $M_B$ will be called the \textbf{Steiner ideal} of $B$.
\end{definition}
We note that the above definition has been described in [EGE]. Similarly, a definition based on hypergraphs appears in [HT]. But our work with Steiner ideals is distinct from both papers.
\section{Characterization of $\mathrm{STS}(n)$}

In this section we will show that $\betti_1(S_n/M_B)$ and $\betti_2(S_n/M_B)$ determine whether $([n],B)$ is an $\mathrm{STS}(n)$.

\begin{lemma}\label{Lemma 5}
Let $M=(m_1,\ldots,m_p)\subset S_n$. Then $M$ is $r$-coverfree if and only if
\[
\betti_s(S_n/M)=\binom{p}{s} \text{ for } s = 0, 1, \dots, r.
\]
Additionally, if we also have $p \geq r+1$, then in fact $M$ is $r$-coverfree if and only if 
\[
\betti_r(S_n/M)=\binom{p}{r}.
\]
\end{lemma}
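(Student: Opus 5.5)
We will compute multigraded Betti numbers via the Taylor complex. The standard fact we use is that $\betti_{s,m}(S/M)$ equals the dimension of the reduced homology $\tilde H_{s-2}$ of the simplicial complex $K_m$. Here, for a monomial $m$ in the lcm lattice, $K_m$ is the complex on the generators $m_i \mid m$ whose faces are the subsets $\sigma$ with $\lcm(\sigma)\neq m$; equivalently, $\betti_{s,m}$ is the homology in degree $s$ of the multidegree-$m$ strand of $\mathbb{T}_M$. Since $\mathbb{T}_M$ has ${p\choose s}$ basis elements in homological degree $s$, we always have $\betti_s\le {p\choose s}$. Equality holds exactly when no consecutive cancellation touches homological degree $s$, i.e. when the multidegree strands are "minimal" in degree $s$.

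\textbf{Forward direction.} Suppose $M$ is $r$-coverfree and $s\le r$. Any Taylor symbol $\sigma$ with $|\sigma|\le r+1$ satisfies $\lcm(\sigma\setminus\{m_j\})\neq\lcm(\sigma)$ for every $j$, since otherwise $m_j$ divides the lcm of at most $r$ others. Hence every entry of $f_s$ for $s\le r+1$ is noninvertible: each coefficient $\lcm(\sigma)/\lcm(\sigma\setminus m_j)$ is a nonconstant monomial. The truncation $F_{r+1}\to F_r\to\cdots\to F_0$ therefore has minimal differentials. Minimal resolutions can be computed degree by degree: the minimal resolution is a direct summand of $\mathbb{T}_M$, and cancellations affecting $F_s$ can only come from unit entries in $f_s$ or $f_{s+1}$. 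It follows that $\betti_s={p\choose s}$ for $s\le r$. To make this rigorous, I would argue with $\betti_{s}=\dim_K \mathrm{Tor}_s(S/M,K)=\dim H_s(\mathbb{T}_M\otimes K)$. After tensoring with $K$, the maps $f_s\otimes K$ vanish for $s\le r+1$, because all their entries lie in the maximal ideal. Thus $H_s=F_s\otimes K$ has dimension ${p\choose s}$ for $s\le r$.

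\textbf{Converse.} Suppose $M$ is not $r$-coverfree, say $m_j\mid\lcm(m_{i_1},\ldots,m_{i_t})$ with $t\le r$ and $j\notin\{i_1,\ldots,i_t\}$. Take $\sigma=\{m_j,m_{i_1},\ldots,m_{i_t}\}$, which has size $t+1\le r+1$. The coefficient of $[\sigma\setminus m_j]$ in $f_{t+1}[\sigma]$ is $\pm1$, so $f_{t+1}\otimes K\neq0$. Hence $\rank(f_{t+1}\otimes K)\ge1$, and this strictly lowers $\dim H_t(\mathbb{T}_M\otimes K)$ below ${p\choose t}$, since $H_t$ is a subquotient of $F_t\otimes K$ with a nonzero image coming in. So $\betti_t<{p\choose t}$ for some $t\le r$, and the first equivalence follows. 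Note that $t\ge1$, because $m_j$ cannot divide the empty lcm $1$. We may also take $t\ge1$ minimal, though this is not needed.

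\textbf{Second statement.} The delicate point is to show that $\betti_r={p\choose r}$ alone forces coverfreeness when $p\ge r+1$. By the same argument, $\betti_r={p\choose r}$ forces $f_r\otimes K=0$ and $f_{r+1}\otimes K=0$. The vanishing of $f_{r+1}\otimes K$ says that no generator $m_j$ divides the lcm of exactly $r$ others. The remark after Definition \ref{Def 2} says this is equivalent to $r$-coverfreeness when $p\ge r+1$: if $m_j$ divides the lcm of $t<r$ others, then we can add further generators to reach exactly $r$ others, which is possible since $p-1\ge r$. This padding step, together with checking that $\dim H_r={p\choose r}$ really forces both adjacent maps to vanish after tensoring, is the only place needing care.
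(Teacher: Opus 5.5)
Your proof is correct and follows the same route as the paper. In both, the Taylor differentials have unit entries $\pm\lcm(\sigma)/\lcm(\sigma\setminus m_j)$ exactly when $m_j$ divides the lcm of the other members of $\sigma$; under $r$-coverfreeness none occur in $f_1,\ldots,f_{r+1}$, otherwise one occurs in $f_{t+1}$ for some $t\le r$, and padding to exactly $r$ others gives the second statement. Your identification $\betti_s=\dim_K H_s(\mathbb{T}_M\otimes K)$ is a rigorous replacement for the paper's consecutive-cancellation language rather than a different argument.

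One minor point: your opening remark about $K_m$ is never used, and the strand statement there should refer to $\mathbb{T}_M\otimes K$. The strands of $\mathbb{T}_M$ itself are acyclic in positive homological degree. This does not affect the proof.
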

\begin{proof}
First suppose $M$ is $r$-coverfree. Let $[\sigma]=[m_{i_1},\ldots,m_{i_{s+1}}]$ and $[\tau]=$\\$[m_{i_1},\ldots,\widehat{m_{i_j}},\ldots,m_{i_{s+1}}]$ be basis elements of $\mathbb{T}_M$ in homological degrees $s+1$ and $s$, respectively, for $0 \leq s \leq r$. Then $m_{i_j}\nmid \mdeg [\tau]$ and hence $\mdeg [\sigma] \neq \mdeg [\tau]$. Now, the differential map $f_{s+1}$ of $\mathbb{T}_M$ acts on the basis element $[\sigma]$ as follows: $f_{s+1}([\sigma]) = \cdots + \frac{(-1)^{j+1}\lcm(m_{i_1},\ldots,m_{i_{s+1}})}{\lcm(m_{i_1},\ldots,\widehat{m_{i_j}},\ldots,m_{i_{s+1}})}[\tau] + \cdots$. 
\newline \newline
This means that the coefficient of $[\tau]$ in the representation of $f_{s+1}([\sigma])$ as a linear combination of the basis elements of $F_s$ is noninvertible. We conclude that the differential matrix $(f_{s+1})$ has no invertible entries. Therefore, the minimal resolution of $S_n/M$, obtained from $\mathbb{T}_M$ by means of consecutive cancellations has the same free $S$-module as $\mathbb{T}_M$ in homological degree $s$. In particular, $\betti_s(S_n/M) = \rank(F_s) = \binom{p}{s}$.
\newline \newline
On the other hand, if $M$ is not $r$-coverfree, then in fact there is some minimal generator which divides the $\lcm$ of a collection of $s \leq r$ others, so in fact there are some basis elements $[\sigma]=[m_{i_1},\ldots,m_{i_{s+1}}]$ and $[\tau]=[m_{i_1},\ldots,\widehat{m_{i_j}},\ldots,m_{i_{s+1}}]$ where $m_{i_j}\mid \mdeg[\tau]$ and hence $\mdeg[\sigma] = \mdeg [\tau]$. This means that the coefficient of $[\tau]$ in the representation of $f_{s+1}([\sigma])$ as a linear combination of the basis elements of $F_s$ is $\pm 1$, invertible. Thus at least one consecutive cancellation can be performed in $\mathbb T_M$ in homological degree $s$. In particular, passing to the minimal resolution, $\betti_s(S_n/M) < \rank(F_s)= \binom{p}{s}$.
\newline \newline
If $p \geq r+1$, instead add arbitrary minimal generators to this collection if necessary to get a collection of exactly $r$ others. Repeat the above argument with the basis elements associated to this collection, say, $[\sigma']=[m_{i_1},\ldots,m_{i_{r+1}}]$ and $[\tau']=[m_{i_1},\ldots,\widehat{m_{i_j}},\ldots,m_{i_{r+1}}]$ to get that $\betti_r(S_n/M) < \rank(F_r)= \binom{p}{r}$.
\end{proof}

\begin{lemma}\label{Lemma 6}
If $([n],B)$ is an $S(2,k,n)$, then $B$ is $(k-1)$-coverfree.
\end{lemma}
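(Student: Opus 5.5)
The argument uses only the defining property of an $S(2,k,n)$: two distinct blocks of $B$ share at most one point. If two distinct blocks $m_i, m_j \in B$ shared two points $a \neq b$, then the $2$-block $\{a,b\}$ would lie in two different $k$-blocks of $B$. That contradicts uniqueness. So $\abs{m_i \cap m_j} \leq 1$ for all $i \neq j$.

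Next I would suppose, toward a contradiction, that $B$ is not $(k-1)$-coverfree. Then some block $m_i$ is contained in the union of at most $k-1$ other blocks, say $m_i \subseteq m_{j_1} \cup \cdots \cup m_{j_s}$ with $s \leq k-1$ and each $j_\ell \neq i$. Intersecting with $m_i$ gives
\[
m_i = (m_i \cap m_{j_1}) \cup \cdots \cup (m_i \cap m_{j_s}).
\]
Each piece has at most one element, so $k = \abs{m_i} \leq s \leq k-1$, a contradiction. Hence no block is covered by $k-1$ or fewer others, and $B$ is $(k-1)$-coverfree.

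The only real step is the first one, turning the uniqueness in the Steiner condition into the pairwise intersection bound. The rest is counting. One edge case needs no separate treatment: when $\abs{B} \leq k-1$, "at most $k-1$ others" still makes sense under Definition \ref{Def 3}, and the argument above does not depend on how many blocks there are.
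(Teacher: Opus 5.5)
Your proof is correct and is essentially the paper's argument: two distinct blocks of an $S(2,k,n)$ share at most one point, so a block meets the union of at most $k-1$ other blocks in at most $k-1$ of its $k$ points and cannot be covered. The only difference is cosmetic: you argue by contradiction and treat ``at most $k-1$ others'' directly, whereas the paper fixes exactly $k-1$ other blocks.
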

\begin{proof}
Let $m,m_1,\ldots,m_{k-1}$ be $k$ distinct $k$-blocks in $B$. Since any pair of $k$-blocks can share at most one point, we have that
\[\abs*{m\cap \bigcup\limits_{i=1}^{k-1}m_i} = \abs*{\bigcup\limits_{i=1}^{k-1}(m\cap m_i)}\leq \sum\limits_{i=1}^{k-1} \abs{m\cap m_i} \leq k-1.\]
Since $\abs m = k$, it follows that $m\nsubseteq \bigcup\limits_{i=1}^{k-1} m_i$. Thus, $B$ is $(k-1)$-coverfree.
\end{proof}

\begin{theorem}\label{Th 7}
Let $([n],B)$ be an $S(2,k,n)$ and let $M_B$ be the Steiner ideal of $B$. Then for all $s=0,\ldots,k-1$, $\betti_s(S_n/M_B) = \dbinom{\frac{n(n-1)}{k(k-1)}} {s}$.
\end{theorem}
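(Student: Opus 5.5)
The plan is to combine Lemma \ref{Lemma 6} with Lemma \ref{Lemma 5}, after first computing the number $p$ of minimal generators of $M_B$.

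\textbf{Step 1: count the generators.} By Definition \ref{Def 4}, the minimal generators of $M_B$ are in bijection with the blocks of $B$, so $p=\abs B$. The counting remark following the definition of a Steiner system gives $\abs B={n\choose 2}/{k\choose 2}$, and hence
\[
p=\frac{n(n-1)}{k(k-1)}.
\]
One check is needed here: the monomials $x_{i_1}\cdots x_{i_k}$ coming from distinct $k$-blocks really do form a minimal generating set. This holds because they are distinct squarefree monomials of the same degree $k$, so none divides another.

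\textbf{Step 2: coverfreeness.} Lemma \ref{Lemma 6} says the collection $B$ is $(k-1)$-coverfree as a collection of sets. The dictionary between Definitions \ref{Def 2} and \ref{Def 3} (monomials correspond to their supports, and $\lcm$ corresponds to union) then shows that the ideal $M_B$ is $(k-1)$-coverfree.

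\textbf{Step 3: apply Lemma \ref{Lemma 5}.} Taking $r=k-1$, the first part of Lemma \ref{Lemma 5} gives $\betti_s(S_n/M_B)=\binom{p}{s}$ for $s=0,\ldots,k-1$. Substituting the value of $p$ from Step 1 yields the statement.

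There is no real obstacle here. The only points needing care are the bookkeeping identification of $p$ with $\abs B$ and the translation of coverfreeness from sets to monomials.

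The first part of Lemma \ref{Lemma 5} does not require $p\geq r+1$, so I do not need to compare $p$ with $k$. Degenerate cases, such as $B$ consisting of a single block when $n=k$, are therefore still covered: they give binomial coefficients $\binom{p}{s}$ that vanish for $s>p$, which matches the Taylor resolution.
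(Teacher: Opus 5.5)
Your proposal is correct and matches the paper's proof: you count $p=\abs B=\frac{n(n-1)}{k(k-1)}$ generators, get $(k-1)$-coverfreeness of $B$ from Lemma \ref{Lemma 6}, transfer it to $M_B$, and apply the first part of Lemma \ref{Lemma 5}. Your remarks on minimality of the generating set and on the degenerate case $p<k$ are sound additions that the paper leaves implicit.
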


\begin{proof}
Note that $\abs B = \frac{n(n-1)}{k(k-1)}$ and by Lemma \ref{Lemma 6}, $B$ is $(k-1)$-coverfree.\\
Equivalently, $M_B$ is minimally generated by $\frac{n(n-1)}{k(k-1)}$ monomials and it is $(k-1)$-coverfree. Now the result follows from Lemma \ref{Lemma 5}. 
\end{proof}

Later, we will prove that the converse of Theorem \ref{Th 7} holds. That is, if the Betti numbers of the block ideal of $B$ have the right binomial coefficients, then $([n],B)$ is an $S(2,k,n)$. 

\begin{definition}\label{Def ???}
Given a collection $B$ of $k$-blocks of $[n]$, we will say that a $t$-block of $[n]$ is a \textbf{private set} if it is contained in exactly one $k$-block in $B$. In addition, we will say that a $t$-block is a \textbf{public set} if it is contained in more than one $k$-block in $B$. Similarly, a point $i$ will be called a \textbf{private point} or a \textbf{public point} if $\{i\}$ is private or public, respectively. Finally, a $t$-block will be called \textbf{uncovered} if it is not contained in any of the $k$-blocks in $B$. 
\end{definition}

\begin{lemma}\label{Lemma 8}
Suppose that $B$ is a $(k-1)$-coverfree set of $k$-blocks of $[n]$, and let $m\in B$. If some $2$-block of $m$ is public, then $m$ has a private point.
\end{lemma}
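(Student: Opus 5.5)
We argue by contradiction, assuming that every point of $m$ is public. Write $m=\{a_1,\ldots,a_k\}$ and let $\{a_1,a_2\}$ be a public $2$-block of $m$. The goal is to cover $m$ by at most $k-1$ other blocks of $B$, which contradicts the $(k-1)$-coverfree hypothesis (Definition \ref{Def 3}).

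First, since $\{a_1,a_2\}$ is public, it lies in some block $m'\in B$ with $m'\neq m$. This one block covers both $a_1$ and $a_2$.

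Next, by assumption each remaining point $a_j$ with $3\le j\le k$ is public. So $\{a_j\}$ lies in at least two blocks of $B$, and we can choose a block $m_j\in B$ with $m_j\neq m$ and $a_j\in m_j$.

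Then
\[m\subseteq m'\cup m_3\cup\cdots\cup m_k.\]
The right-hand side is a union of at most $1+(k-2)=k-1$ blocks of $B$, all different from $m$; some of them may coincide, which only lowers the count. Hence $m$ is contained in the union of at most $k-1$ other blocks, contradicting the assumption that $B$ is $(k-1)$-coverfree. Therefore some point of $m$ must be private.

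The argument is short, and the only point needing care is the count. The public pair is what saves one block: without it we would need $k$ blocks, one per point, and coverfreeness at level $k-1$ would give no contradiction. The case $k=2$ needs no separate treatment, since then $m'$ alone contains $m$, which is already a contradiction.
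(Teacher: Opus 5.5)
Your proof is correct and follows essentially the same argument as the paper: the other block $m'$ through the public pair covers two points of $m$ at once, and one other block for each of the remaining $k-2$ points covers $m$ by at most $k-1$ other blocks, contradicting $(k-1)$-coverfreeness. As in the paper, you only use that the $k-2$ points outside the public pair are public, so the argument actually shows the private point lies among those points.
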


\begin{proof}
Suppose $m$ and $m'$ share the same $2$-block $\{a,b\}$. Then $m=\{a,b,i_1,\ldots,i_{k-2}\}$ and  $m'=\{a,b,j_1,\ldots,j_{k-2}\}$. By means of contradiction, assume that for all $s=1,\ldots,k-2$ there is a $k$-block $m_s\in B\setminus\{m\}$ such that $i_s\in m_s$. Then $m\subseteq \bigg (\bigcup\limits_{s=1}^{k-2} m_s \bigg) \cup m'$, which contradicts the fact that $B$ is $(k-1)$-coverfree. Thus, some point $i_s$ in $m$ must be private.
\end{proof}

\begin{lemma}\label{Lemma 9}
Suppose that $B$ is a $2$-coverfree set of $3$-blocks of $[n]$. Then, $\abs B \leq n(n-1)/6$. Furthermore, if $n>4$, equality holds only when no pair of $3$-blocks in $B$ share a $2$-block.
\end{lemma}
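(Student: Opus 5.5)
The plan is a weighted double-counting (charging) argument on the $\binom n2$ $2$-blocks of $[n]$. We give each $3$-block of $B$ weight at least $3$, distributed over $2$-blocks so that no $2$-block receives total weight more than $1$, and public $2$-blocks receive weight $0$. This yields
\[3\abs B\le \binom n2-\#\{\text{public }2\text{-blocks}\},\]
which gives the bound and, when $n\geq 5$, the equality statement at once. Small cases $n\le 4$ are checked by hand. For $n=4$, any two distinct $3$-blocks of $[4]$ have union $[4]$, so three blocks would violate $2$-coverfreeness; hence $\abs B\le 2=4\cdot 3/6$. For $n\le 3$ the bound is trivial.

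\emph{Charging rule} (assume $n\ge 5$). Let $m=\{a,b,c\}\in B$. If all three $2$-blocks of $m$ are private, $m$ gets weight $1$ from each, for a total of $3$. Otherwise some $2$-block of $m$ is public. Since $B$ is $2$-coverfree and $k=3$, Lemma \ref{Lemma 8} gives $m$ a private point, say $a$. Because $a$ lies only in $m$, the $2$-blocks $\{a,b\}$ and $\{a,c\}$ are private, so $m$ gets weight $1$ from each. Moreover, for every $x\notin m$ the $2$-block $\{a,x\}$ is uncovered, since any block containing it would contain $a$. We let $m$ take weight $\tfrac12$ from each of these $n-3$ uncovered $2$-blocks. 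Then $m$ receives at least $2+\tfrac{n-3}{2}\ge 3$, using $n\ge5$.

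\emph{Checking the capacities.} A private $2$-block lies in exactly one block, so it is charged at most once, with weight $1$. An uncovered $2$-block $\{u,v\}$ is charged $\tfrac12$ only by a block having $u$ or $v$ as a chosen private point. Each point is private to at most one block, so it is charged by at most two blocks, hence total weight at most $1$. Public $2$-blocks are never charged. Summing over $B$ gives the displayed inequality, so $\abs B\le n(n-1)/6$. If equality holds, there are no public $2$-blocks, which means no two $3$-blocks of $B$ share a $2$-block.

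The main obstacle I expect is making the charging airtight. In particular, each block must use one fixed chosen private point, so that uncovered $2$-blocks are not over-counted. The case $n=5$ is also delicate, since there the bound $2+\tfrac{n-3}2$ is exactly $3$. Strictness at equality therefore cannot come from this bound. It must come from the slack created by the public $2$-block itself, which is why I subtract the public $2$-blocks in the inequality rather than relying on $2+\tfrac{n-3}{2}>3$.
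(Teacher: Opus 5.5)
Your proof is correct, but it takes a different route from the paper.

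\textbf{The paper's route.} The paper argues by induction on $n$. If no $2$-block is public, it counts $3\abs B\le\binom n2$ directly. Otherwise it uses Lemma \ref{Lemma 8} to find a block with a private point $a$, deletes that block and $a$, and applies the inductive hypothesis on $n-1$ points. This gives $\abs B\le \frac{n(n-1)}{6}-\frac{2(n-4)}{6}$, which is strict for $n>4$.

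\textbf{Your route.} You use the same key input, Lemma \ref{Lemma 8}, but replace the induction with one global discharging argument. The two differ in how they treat the pairs through a private point.
\begin{itemize}
\item Deleting $a$ in the paper removes all $n-1$ pairs through $a$ at once, with nothing shared. So the inductive step already works for $n\ge 4$, and $n=4$ needs no separate treatment.
\item Your rule must split each uncovered pair $\{a,x\}$ in half, since it may also be claimed through $x$. This is why you need $n\ge 5$ and a hand check at $n=4$.
\end{itemize}

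\textbf{What your approach buys.} You get a non-inductive proof and the explicit inequality $3\abs B\le\binom n2-\#\{\text{public }2\text{-blocks}\}$. The equality statement follows immediately from it, including the tight case $n=5$.

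Your scheme also generalizes cleanly to $k$-blocks. There a block with a public $2$-block receives weight $(k-1)+\frac{n-k}{2}$. This is at least $\binom k2$ exactly when $n\ge (k-1)^2+1$. So the same capacity check gives $\binom k2\abs B\le\binom n2-\#\{\text{public }2\text{-blocks}\}$ for every $n>(k-1)^2$. That proves the bound and the equality part of Theorem \ref{Theorem C} in that range directly, which is all Theorem \ref{Th D} uses. It does so without Lemmas \ref{Lemma A} and \ref{Lemma B} or an induction starting at $n=(k-1)^2$.
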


\begin{proof}
If $n\leq 3$, the result holds trivially. We will prove the case $n\geq 4$ by induction. 
\newline \newline
Suppose that the lemma holds for $n-1$, $n \geq 4$, and let $B$ be a 2-coverfree collection of 3-blocks of $[n]$. On the one hand, if no pair of 3-blocks in $B$ share a 2-block, then each 3-block in $B$ contains 3 unique 2-blocks. There are $n\choose 2$ distinct $2$-blocks; so we must have that $3 \abs B \leq {n\choose 2}$. That is, $\abs B \leq n(n-1)/6$. \newline \newline
On the other hand, if two 3-blocks of $B$ share a 2-block, it follows from Lemma \ref{Lemma 8} that some 3-block in $B$ contains a private point. By removing that point from $[n]$ and that 3-block from $B$, we obtain a collection $B'$ of 3-blocks on the remaining $n-1$ points. By our inductive hypothesis,
\[\abs B = \abs{B'}+1 \leq \frac{(n-1)(n-2)}{6}+1 = \frac{n(n-1)}{6} - \frac{2(n-4)}{6} \leq \frac{n(n-1)}{6}\]
for $n \geq 4$. In fact this inequality becomes strict for $n > 4$. 

Therefore, if $\abs B = n(n-1)/6$ and $n>4$, we must have that no pair of 3-blocks in $B$ share a 2-block.
\end{proof}
\begin{corollary}\label{Cor 10}
Suppose that $B$ is a collection of $3$-blocks of $[n]$, with $\abs B =n(n-1)/6$, for $n > 4$. If $B$ is $2$-coverfree, then $([n],B)$ is an $\mathrm{STS}(n)$.
\end{corollary}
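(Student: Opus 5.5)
The plan is to derive the corollary directly from the equality case of Lemma \ref{Lemma 9}, together with a double-counting of $2$-blocks.

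First, since $B$ is a $2$-coverfree collection of $3$-blocks of $[n]$ with $n>4$ and $\abs B = n(n-1)/6$, equality holds in the bound of Lemma \ref{Lemma 9}. The second part of that lemma then says that no two $3$-blocks of $B$ share a $2$-block. Hence every $2$-block of $[n]$ lies in at most one $3$-block of $B$, so each $2$-block is either private or uncovered in the sense of Definition \ref{Def ???}.

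Next I will count. Each $3$-block of $B$ contains exactly $3$ two-blocks, and these are pairwise distinct across different blocks of $B$ by the previous step. So the number of covered $2$-blocks is
\[3\abs B = 3\cdot \frac{n(n-1)}{6} = \binom{n}{2}.\]
This is exactly the total number of $2$-blocks of $[n]$, so there are no uncovered $2$-blocks. Therefore every $2$-block is contained in exactly one $3$-block of $B$, which is the definition of an $S(2,3,n)$, that is, an $\mathrm{STS}(n)$.

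The only step with real content is the appeal to the equality clause of Lemma \ref{Lemma 9}, and it is already established there for $n>4$. I do not expect any genuine obstacle beyond verifying that the hypotheses match: the collection is $2$-coverfree, the blocks have size $3$, and $n>4$.
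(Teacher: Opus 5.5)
Your proposal is correct and follows the paper's proof: the equality clause of Lemma~\ref{Lemma 9} (valid since $n>4$) gives that no two blocks of $B$ share a $2$-block. The count $3\abs B=\binom{n}{2}$ then forces every $2$-block to lie in exactly one block of $B$.
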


\begin{proof}
By Lemma \ref{Lemma 9}, no pair of 3-blocks in $B$ share a 2-block. Since $\abs B = n(n-1)/6$, it follows that $3 \abs B = \binom{n}{2}$ and hence, each 2-block must be in a unique 3-block. In other words, $([n],B)$ is an $\mathrm{STS}(n)$.
\end{proof}
\vspace{\baselineskip}
\begin{theorem}\label{Th 11}
Consider the pair $([n],B)$ where $B$ is a collection of 3-blocks of $[n]$, for $n>4$, and let $M_B$ be the block ideal of $B$. Then $([n],B)$ is an $\mathrm{STS}(n)$ if and only if $\betti_1(S_n/M_B) = n(n-1)/6$ and $\betti_2(S_n/M_B) = \dbinom{n(n-1)/6} {2}$.
\end{theorem}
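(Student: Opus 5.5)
The plan is to prove the two directions separately, using Theorem \ref{Th 7} for the forward implication and combining Lemma \ref{Lemma 5} with Corollary \ref{Cor 10} for the converse.

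For the forward direction, assume $([n],B)$ is an $\mathrm{STS}(n)$, that is, an $S(2,3,n)$. Theorem \ref{Th 7} with $k=3$ gives $\betti_s(S_n/M_B)=\binom{n(n-1)/6}{s}$ for $s=0,1,2$. Taking $s=1$ yields $\betti_1(S_n/M_B)=n(n-1)/6$, and taking $s=2$ yields the required value of $\betti_2$.

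For the converse, assume $\betti_1(S_n/M_B)=n(n-1)/6$ and $\betti_2(S_n/M_B)=\binom{n(n-1)/6}{2}$.

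First I identify $p=\abs B$. The blocks in $B$ are distinct $3$-subsets, so the corresponding monomials are distinct squarefree monomials of the same degree. None divides another, so they form the minimal generating set of $M_B$. Since $\betti_1(S_n/M_B)$ is the number of minimal generators, $p=\abs B=\betti_1(S_n/M_B)=n(n-1)/6$.

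Next I apply the second part of Lemma \ref{Lemma 5} with $r=2$. This needs $p\geq 3$, which holds because $n>4$ forces $n\ge 5$ and hence $p=n(n-1)/6\geq 20/6>3$. Then $\betti_2(S_n/M_B)=\binom{p}{2}$ implies that $M_B$ is $2$-coverfree. By the translation between monomials and sets noted after Definition \ref{Def 3}, the collection $B$ is $2$-coverfree.

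Finally, $B$ is a $2$-coverfree collection of $3$-blocks of $[n]$ with $\abs B=n(n-1)/6$ and $n>4$. Corollary \ref{Cor 10} then gives that $([n],B)$ is an $\mathrm{STS}(n)$.

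There is no serious obstacle, since the earlier results do essentially all the work. The point needing the most care is the hypothesis $p\ge r+1$ in the second part of Lemma \ref{Lemma 5}, which is what allows the single condition on $\betti_2$ to suffice. The hypothesis $n>4$ guarantees it.
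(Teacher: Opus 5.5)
Your proof is correct and follows essentially the paper's own argument. Both use Theorem \ref{Th 7} for the forward direction, and for the converse both use Lemma \ref{Lemma 5} with $r=2$ (justified by $\abs B\ge 3$) together with Corollary \ref{Cor 10}. The only difference is that the paper phrases the converse contrapositively (not an $\mathrm{STS}(n)$ $\Rightarrow$ $\betti_1$ or $\betti_2$ is wrong), while you argue it directly.
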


\begin{proof}
The forward implication follows immediately from Theorem \ref{Th 7}. Now, we will prove that if $([n],B)$ is not an $\mathrm{STS}(n)$, then $\betti_1(S_n/M_B) \neq \frac{n(n-1)}{6}$ or $\betti_2(S_n/M_B) \neq \binom{n(n-1)/6}{2}$. \newline \newline
If $\abs B \neq n(n-1)/6$, then $\betti_1(S_n/M_B) \neq n(n-1)/6$ and we are done. If $\abs B = n(n-1)/6$ but $([n],B)$ is not an $\mathrm{STS}(n)$, then $B$ is not 2-coverfree, by Corollary \ref{Cor 10}, and note that $\abs B \geq 3$. It follows that $M_B$ is not 2-coverfree, and by Lemma \ref{Lemma 5}, we have that $\betti_2(S_n/M_B) < \rank(F_2) = {{n(n-1)/6}\choose 2}$.
\end{proof}

\section{Characterization of $S(2,k,n)$}
We extend our methods to general $S(2,k,n)$ systems. First, we prove several lemmas.
\begin{lemma}\label{Lemma A}
If $k \geq 4$ and $k+2\leq r\leq (k-1)^2$, then $(r-k+2){k\choose 2}>{r\choose 2}$.
\end{lemma}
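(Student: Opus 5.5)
The plan is to treat the inequality as a statement about a quadratic in $r$ with $k$ fixed, and to use concavity so that only the two endpoints $r=k+2$ and $r=(k-1)^2$ need checking. Set
\[
g(r)=(r-k+2)\binom{k}{2}-\binom{r}{2}=(r-k+2)\frac{k(k-1)}{2}-\frac{r(r-1)}{2}.
\]
As a function of the real variable $r$, $g$ has leading coefficient $-\tfrac12$, so it is strictly concave. A concave function on an interval attains its minimum at an endpoint. It therefore suffices to show $g(k+2)>0$ and $g((k-1)^2)>0$ for every $k\ge 4$; positivity then holds on the whole integer range $k+2\le r\le (k-1)^2$. Note that this range is nonempty precisely because $k\ge4$, since $(k-1)^2\ge k+2$ iff $k^2-3k-1\ge0$.

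\textbf{Left endpoint.} At $r=k+2$ we get
\[
2g(k+2)=4\cdot k(k-1)-(k+2)(k+1)=3k^2-7k-2.
\]
This is positive for $k\ge 3$; at $k=4$ it equals $18$, and the expression increases for $k\ge2$.

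\textbf{Right endpoint.} At $r=(k-1)^2$ we have $r-k+2=k^2-3k+3$, so
\[
2g\big((k-1)^2\big)=(k^2-3k+3)k(k-1)-(k-1)^2\big((k-1)^2-1\big).
\]
Factor out $k(k-1)$, using $(k-1)^2-1=k(k-2)$:
\[
2g=k(k-1)\big[(k^2-3k+3)-(k-1)(k-2)\big]=k(k-1)\cdot 1>0.
\]

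This last computation is the only delicate point. The margin at the right endpoint is very thin (it mirrors the equality case of the projective-plane count), so the algebra must be done exactly rather than estimated. The factorization above shows that $g$ is positive there for every $k\ge 2$. Combining the two endpoint bounds with concavity gives $(r-k+2)\binom{k}{2}>\binom{r}{2}$ throughout $k+2\le r\le (k-1)^2$.
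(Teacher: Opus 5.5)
Your proposal is correct and follows essentially the same proof as the paper. The paper also treats $(x-k+2)k(k-1)-x(x-1)$, which is twice your $g$, as a concave-down quadratic on $[k+2,(k-1)^2]$, and checks the same endpoint values $3k^2-7k-2>0$ and $k^2-k>0$.
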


\begin{proof}
Let $f:[k+2,(k-1)^2]\rightarrow \mathbb{R}$ be defined by $f(x)=(x-k+2)k(k-1)-x(x-1)$. \newline
Equivalently, $f(x)=-x^2+(1+k(k-1))x+(2-k)k(k-1)$; in particular, $f$ is quadratic and concave down. Thus, it achieves its absolute minimum value at one of the endpoints of $[k+2,(k-1)^2]$. Now, $f(k+2)=3k^2-7k-2>0$, and $f\left((k-1)^2\right)=k^2-k>0$ for any $k \geq 4$. This means that $f(x)>0$ for all $x\in [k+2,(k-1)^2]$. In particular, when $r\in \mathbb{N}$ and $r\in [k+2,(k-1)^2]$, we have that:
\[
0<f(r)=(r-k+2)k(k-1)-r(r-1)=2\big[(r-k+2)\tbinom{k}{2}-\tbinom{r}{2}\big]
\]
which, in turn, implies that $(r-k+2){k\choose 2}>{r\choose 2}$.
\end{proof} 

\begin{lemma}\label{Lemma B}
Suppose that $B$ is a $(k-1)$-coverfree set of $k$-blocks of $[n]$, with $k\geq 3$ and $n=(k-1)^2$. Then, $\abs B \leq \frac{n(n-1)}{k(k-1)}$.
\end{lemma}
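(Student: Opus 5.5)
The plan is to prove a stronger statement by induction on the number of points and then specialize to $n=(k-1)^2$. First note what the target bound is: since $n-1=(k-1)^2-1=k(k-2)$, we get
\[
\frac{n(n-1)}{k(k-1)}=\frac{(k-1)^2\,k(k-2)}{k(k-1)}=(k-1)(k-2)=n-k+1 .
\]
So it suffices to prove the following claim.

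\emph{Claim.} Let $k\geq 4$ and $k\leq r\leq (k-1)^2$. If $B$ is a $(k-1)$-coverfree collection of $k$-blocks on an $r$-point set, then $\abs B\leq r-k+1$.

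The case $k=3$ (so $n=4$) is handled separately by Lemma \ref{Lemma 9}, which gives $\abs B\leq 4\cdot 3/6=2=(k-1)(k-2)$.

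We prove the claim by induction on $r$, following the scheme of Lemma \ref{Lemma 9}, with two cases.

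\textbf{Case 1: some $2$-block is contained in two $k$-blocks of $B$.} By Lemma \ref{Lemma 8}, some $m\in B$ has a private point $i$. Delete $i$ from the point set and $m$ from $B$. The result $B'$ is a collection of $k$-blocks on $r-1$ points, because no other block contains $i$. It is still $(k-1)$-coverfree, since a subcollection of a coverfree collection is coverfree.
\begin{itemize}
\item If $r-1\geq k$, the inductive hypothesis gives $\abs{B'}\leq r-k$, so $\abs B\leq r-k+1$.
\item If $r-1<k$, then $B'=\emptyset$ and $\abs B=1\leq r-k+1$.
\end{itemize}

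\textbf{Case 2: no $2$-block lies in two blocks of $B$.} Each block contributes $\binom k2$ distinct $2$-blocks, so $\abs B\leq \binom r2/\binom k2$.
\begin{itemize}
\item For $k+2\leq r\leq (k-1)^2$, Lemma \ref{Lemma A} gives $\binom r2/\binom k2<r-k+2$. Since $\abs B$ is an integer, $\abs B\leq r-k+1$.
\item For $r=k$ or $r=k+1$, any two distinct $k$-subsets of an $r$-set share at least $2k-r\geq k-1\geq 2$ points. Hence in Case 2 we have $\abs B\leq 1\leq r-k+1$.
\end{itemize}
The base case $r=k$ is covered by these direct arguments, since at most one $k$-block fits on $k$ points. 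The induction therefore runs cleanly from $r=k$ up to $r=(k-1)^2$.

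Taking $r=n=(k-1)^2$ in the claim gives $\abs B\leq n-k+1=\frac{n(n-1)}{k(k-1)}$, which is the statement.

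The main obstacle is choosing the right inductive bound. The obvious candidate $r(r-1)/(k(k-1))$ does not survive the private-point removal step for small $r$. The linear bound $r-k+1$ does survive it, and Lemma \ref{Lemma A} is exactly the inequality needed to make the no-shared-pair case fit under that linear bound. The two quantities coincide precisely at $r=(k-1)^2$, which explains the hypothesis $n=(k-1)^2$.
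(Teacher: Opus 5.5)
Your proof is correct and is essentially the paper's argument. Your upward induction on the linear bound $\abs B\leq r-k+1$ is the contrapositive of the paper's descending chain $B_n,\ldots,B_{k+1}$, in which each $B_j$ has at least $j-k+2$ blocks on at most $j$ points. It uses the same ingredients: Lemma \ref{Lemma A} to force a shared $2$-block, Lemma \ref{Lemma 8} to remove a block together with its private point, and Lemma \ref{Lemma 9} for $k=3$. The only difference is cosmetic and occurs at the bottom of the argument: the paper gets its contradiction from three blocks on $k+1$ points via coverfreeness, whereas you settle $r=k$ and $r=k+1$ with your two-case analysis.
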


\begin{proof}
The $k=3$ case follows from Lemma \ref{Lemma 9}. Now suppose that $k \geq 4$. Since
\[
\frac{n(n-1)}{k(k-1)} = \dfrac{(k-1)^2[(k-1)^2-1]}{k(k-1)}=\frac{(k-1)(k-2)k}{k} = (k-1)(k-2)=n-k+1,
\]
we need to show that $\abs B \leq n-k+1$. Assume, by way of contradiction, that $\abs B \geq n-k+2$.
\newline \newline
For $k+2 \leq j \leq n$, let $B_j \subseteq B$ be a collection of at least $j-k+2$ different $k$-blocks in $B$, where all the blocks in $B_j$ live on at most $j$ of the points in $[n]$. (Such a collection exists for $j=n$; $B$ itself is one such example of a $B_n$.)
\newline \newline
The $k$-blocks of $B_j$ capture $(j-k+2){k\choose 2}$ 2-blocks, when there are only $\binom{j}{2}$ distinct 2-blocks on $j$ points. By Lemma \ref{Lemma A}, $(j-k+2){k\choose 2}>{j\choose 2}$ and thus, there must be two $k$-blocks in $B_j$ which contain the same 2-block. By Lemma \ref{Lemma 8}, there is a $k$-block in $B_j$ which has a private point. 
\newline \newline
Now, we will derive a contradiction as follows. Given that some $B_j$ exists, where $k+2\leq j\leq n$ and $B_j$ contains at least $j-k+2$ blocks on at most $j$ points, apply the previous reasoning to conclude that there is a $k$-block in $B_j$ which contains a private point. Remove that $k$-block and that point to obtain a collection $B_{j-1}$, which consists of at least $(j-1)-k+2$ different $k$-blocks on at most $j-1$ points.
\newline \newline
Thus, starting with $B_n := B$, we construct a sequence of collections $B_n, B_{n-1}, \dots, B_{k+1}$. The last collection, $B_{k+1}$, consists of at least $(k+1)-k+2=3$ different $k$-blocks on $k+1$ points. Since any two $k$-blocks $m_1,m_2\in B_{k+1}$ cover the $k+1$ points, a third $k$-block $m_3$ must be contained in the union of the other two. Returning to $B$, we have that the collection $\{m_1, m_2\} \subseteq B \setminus \{m_3\}$ covers $m_3$, contradicting $(k-1)$-coverfreeness, so we are done.
\end{proof}

\begin{theorem} \label{Theorem C}
Suppose that $B$ is a $(k-1)$-coverfree set of $k$-blocks of $[n]$, with $k\geq 3$ and $n\geq (k-1)^2$. Then $\abs B \leq \frac{n(n-1)}{k(k-1)}$. Furthermore, if $n>(k-1)^2$ and $\abs B =\frac{n(n-1)}{k(k-1)}$, then $([n],B)$ is an $S(2,k,n)$.
\end{theorem}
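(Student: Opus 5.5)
I will prove Theorem \ref{Theorem C} by induction on $n$, starting from the base case $n=(k-1)^2$, which is Lemma \ref{Lemma B}. The inductive step mirrors the proof of Lemma \ref{Lemma 9}. Set $N(n)=\frac{n(n-1)}{k(k-1)}$. Suppose the bound holds for $n-1\geq (k-1)^2$, and let $B$ be a $(k-1)$-coverfree collection of $k$-blocks of $[n]$ with $n>(k-1)^2$. I split into two cases according to whether some $2$-block is public.

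\emph{Case 1: no $2$-block is public.} Each $k$-block of $B$ contains $\binom{k}{2}$ distinct $2$-blocks, and no $2$-block is counted twice. Hence $\binom{k}{2}\abs B\leq\binom{n}{2}$, that is, $\abs B\leq N(n)$.

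\emph{Case 2: some $2$-block is public.} By Lemma \ref{Lemma 8}, some $m\in B$ has a private point $i$. I delete $i$ from $[n]$ and $m$ from $B$. Every remaining block avoids $i$, since $i$ is private to $m$. The resulting collection $B'$ is a collection of $k$-blocks on $n-1$ points, and it is still $(k-1)$-coverfree, because coverfreeness passes to subcollections. Since $n-1\geq (k-1)^2$, the inductive hypothesis gives
\[
\abs B\leq N(n-1)+1 .
\]
It remains to compare this with $N(n)$. We have
\[
N(n)-N(n-1)=\frac{n(n-1)-(n-1)(n-2)}{k(k-1)}=\frac{2(n-1)}{k(k-1)} .
\]
For $n>(k-1)^2$ we get $2(n-1)\geq 2((k-1)^2)=2(k-1)^2>k(k-1)$, because $2(k-1)>k$ for $k\geq 3$. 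Thus $N(n)-N(n-1)>1$, so $N(n-1)+1<N(n)$, and therefore $\abs B<N(n)$ strictly. This completes the induction for the bound.

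For the second statement, take $n>(k-1)^2$ and $\abs B=N(n)$. Case 2 would give the strict inequality $\abs B<N(n)$, so no $2$-block is public. Then the $\binom{k}{2}\abs B=\binom{n}{2}$ two-blocks captured by the blocks of $B$ are pairwise distinct. They therefore exhaust all $2$-blocks of $[n]$, so every $2$-block lies in exactly one block of $B$. This is exactly the statement that $([n],B)$ is an $S(2,k,n)$, as in Corollary \ref{Cor 10}.

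The main obstacle is the arithmetic comparison $N(n-1)+1<N(n)$ together with bookkeeping at the edges of the induction. When $n=(k-1)^2+1$, the inductive hypothesis is used exactly at the base $n-1=(k-1)^2$, which is Lemma \ref{Lemma B}, so that case is covered. The inequality $2(n-1)>k(k-1)$ holds throughout $n>(k-1)^2$, so the strictness needed for the characterization comes out of the same induction, with no separate argument required.
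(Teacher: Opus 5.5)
Your proposal is correct and follows the paper's proof step for step. It uses induction on $n$ with Lemma~\ref{Lemma B} as the base case, splits on whether some $2$-block is public, and deletes a block together with a private point supplied by Lemma~\ref{Lemma 8}. It then gets strictness from the same comparison $N(n-1)+1<N(n)$; your estimate $2(n-1)\geq 2(k-1)^2>k(k-1)$ is just a slightly cleaner form of the paper's bound $\abs B<\frac{n(n-1)}{k(k-1)}-\frac{k-3}{k-1}$.
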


\begin{proof}
By induction on $n$. If $n=(k-1)^2$, then the statement holds by Lemma \ref{Lemma B}. Now, suppose that $n>(k-1)^2$ and the result holds for $n-1$. \newline \newline
On one hand, if no two blocks in $B$ share a 2-block, then there are no repetitions and the number of 2-blocks captured by all the $k$-blocks in $B$ is at most ${n\choose 2}$; that is, $\abs B {k\choose 2}\leq {n\choose 2}$. Therefore, $\abs B \leq \frac{n(n-1)}{k(k-1)}$. 
\newline \newline
On the other hand, if some $2$-block is public, then by Lemma \ref{Lemma 8} there is a $k$-block in $B$ which contains a private point. Remove that $k$-block and that point to obtain a collection $B'\subseteq B$ on at most $n-1$ points. By our inductive hypothesis,
\[
\abs B = \abs{B'}+1 \leq \frac{(n-1)(n-2)}{k(k-1)}+1 = \frac{n(n-1)}{k(k-1)} - \frac{2(n-1)-k(k-1)}{k(k-1)},
\]
and for $n > (k-1)^2$,
\[
\abs B < \frac{n(n-1)}{k(k-1)} - \frac{2((k-1)^2-1)-k(k-1)}{k(k-1)} = \frac{n(n-1)}{k(k-1)} - \frac{k-3}{k-1}.
\]
In particular, we have $\abs B < \frac{n(n-1)}{k(k-1)}$, as desired. Note that the inequality is strict in this case. 
\newline \newline
Now for $n > (k-1)^2$, equality only holds in the case where no two $k$-blocks in $B$ share a 2-block. But since $\abs B = \frac{n(n-1)}{k(k-1)}$, each 2-block of $[n]$ must be contained in a unique $k$-block in $B$. We conclude in this case that $([n],B)$ is an $S(2,k,n)$.
\end{proof}

\begin{remark}
Theorem \ref{Theorem C} may be regarded as a particular case of the excellent article [EFF], by Erdös et al. However, Theorem \ref{Theorem C} holds true for all $n>(k-1)^2$, while the result in [EFF] was proved asymptotically, without an explicit lower bound on $n$. This seemingly minor difference is in reality critical. For example, to reformulate the prime power conjecture we will use the fact that $n>(k-1)^2$ (and we would not be able to restate that conjecture if we replaced the constraint $n>(k-1)^2$ with, say, $n>k^2$ in Theorem \ref{Theorem C}).
\end{remark}
Therefore, although we acknowledge the work in [EFF], the upcoming results rely on our own Theorem \ref{Theorem C}.

\begin{theorem}\label{Th D}
Suppose that $k\geq 3$ and $n>(k-1)^2$. Consider a collection $B$ of $k$-blocks of $[n]$, and let $M_B$ be the block ideal of $B$. Then $([n],B)$ is an $S(2,k,n)$ if and only if $\betti_1(S_n/M_B) = \dfrac{n(n-1)}{k(k-1)}$ and $\betti_{k-1}(S_n/M_B) = \dbinom{\frac{n(n-1)}{k(k-1)}} {k-1}$.
\end{theorem}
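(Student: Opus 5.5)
The proof follows the template of Theorem \ref{Th 11}, with Theorem \ref{Theorem C} taking the place of Corollary \ref{Cor 10}.

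For the forward implication, assume $([n],B)$ is an $S(2,k,n)$. Theorem \ref{Th 7} gives $\betti_s(S_n/M_B)=\binom{n(n-1)/(k(k-1))}{s}$ for every $s=0,\ldots,k-1$. In particular this holds for $s=1$ and $s=k-1$, which is exactly what is required.

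For the converse we argue by contrapositive. Suppose $([n],B)$ is not an $S(2,k,n)$; we show that one of the two Betti number conditions fails. Write $p=\abs B$. The minimal generators of $M_B$ are precisely the monomials corresponding to the blocks of $B$, so $\betti_1(S_n/M_B)=p$. If $p\neq \frac{n(n-1)}{k(k-1)}$, the first condition fails and we are done.

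So assume $p=\frac{n(n-1)}{k(k-1)}$. Since $n>(k-1)^2$ and $k\geq 3$, Theorem \ref{Theorem C} applies. If $B$ were $(k-1)$-coverfree, Theorem \ref{Theorem C} would force $([n],B)$ to be an $S(2,k,n)$, a contradiction. Hence $B$ is not $(k-1)$-coverfree. By the dictionary between sets and monomials described after Definition \ref{Def 3}, $M_B$ is then not $(k-1)$-coverfree either.

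To finish, we invoke the second part of Lemma \ref{Lemma 5} with $r=k-1$. This requires $p\geq k$. From $n\geq (k-1)^2+1$ we get
\[
p=\frac{n(n-1)}{k(k-1)}\geq \frac{((k-1)^2+1)(k-1)^2}{k(k-1)}=\frac{(k-1)((k-1)^2+1)}{k}.
\]
For $k\geq 3$ the right-hand side is at least $k$, since $(k-1)^3+(k-1)\geq k^2$ holds already at $k=3$ (where $10\geq 9$) and the left side grows faster. With $p\geq k=r+1$, Lemma \ref{Lemma 5} gives $\betti_{k-1}(S_n/M_B)<\binom{p}{k-1}$, so the second condition fails.

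The only step needing care is this verification of $p\geq k$, which is what allows the single Betti number $\betti_{k-1}$ to detect failure of $(k-1)$-coverfreeness. The combinatorial content is supplied entirely by Theorem \ref{Theorem C}.
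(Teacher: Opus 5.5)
Your proposal is correct and matches the paper's proof step for step: Theorem \ref{Th 7} gives the forward direction, and for the converse you use $\betti_1$ to force $\abs B=\frac{n(n-1)}{k(k-1)}$, Theorem \ref{Theorem C} to show $B$ is not $(k-1)$-coverfree, and the second part of Lemma \ref{Lemma 5} once $p\geq k$ is checked. The only difference is cosmetic: the paper gets $p\geq k$ from $p>(k-2)(k-1)\geq k-1$, since $(k-2)(k-1)$ is the value of $\frac{n(n-1)}{k(k-1)}$ at $n=(k-1)^2$, whereas you bound $p$ directly from $n\geq (k-1)^2+1$; both are valid.
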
 

\begin{proof}
The forward implication follows from Theorem \ref{Th 7}. Now, we will show that if $([n],B)$ is not an $S(2,k,n)$, then $\betti_1(S_n/M_B) \neq \frac{n(n-1)}{k(k-1)}$ or $\betti_{k-1}(S_n/M_B) \neq \dbinom{\frac{n(n-1)}{k(k-1)}} {k-1}$. 
\newline \newline
If $\abs B \neq \frac{n(n-1)}{k(k-1)}$, then $\betti_1(S_n/M_B) \neq \frac{n(n-1)}{k(k-1)}$. Now assume $\abs B = \frac{n(n-1)}{k(k-1)}$ but $([n],B)$ still fails to be an $S(2,k,n)$; it follows from Theorem \ref{Theorem C} that $B$ is not $(k-1)$-coverfree and thus $M_B$ is not $(k-1)$-coverfree. Also note that 
\[
\abs B = p = \frac{n(n-1)}{k(k-1)}>(k-2)(k-1) \geq k-1.
\]
In particular $p \geq k$, so by Lemma \ref{Lemma 5}, we have $\betti_{k-1}(S_n/M_B) < \binom{\abs B}{k-1}$, and we are done.
\end{proof}

Since a Steiner system $S(2,q+1,q^2+q+1)$ is equivalent to a projective plane of order $q$, the prime power conjecture can be formulated purely in terms of Betti numbers of monomial ideals as follows:
\begin{conjecture}[Algebraic Prime Power Conjecture]
If a monomial ideal $M$ of $S = K[x_1, \dots, x_{q^2+q+1}]$ generated by squarefree monomials of degree $q+1$ has Betti numbers
\[
\betti_1(S/M) = q^2+q+1, \qquad \betti_q(S/M) = \binom{q^2+q+1}{q}
\]
then $q$ is a prime power.
\end{conjecture}

(Note that in this case $k=q+1$ and $n=q^2+q+1$. Therefore, the condition $n>(k-1)^2$ stated in Theorem \ref{Th D} is satisfied. Also, clearly $q^2+q+1 \geq q+1$, so we only need to check $\betti_1(S/M)$ and $\betti_q(S/M)$.)

\section{Characterization of $\mathrm{SQS}(n)$}

The notation that we introduce below is more general than what this section requires, but it is appropriate for the next section.
\begin{definition}
Let $m$ be a set, and let $0 \leq s\leq \abs m$. Then $\binom{m}{s}$ is the collection of all $s$-element subsets of $m$.
\end{definition}
\begin{definition}
Let $B$ be a collection of $k$-blocks of $[n]$. We define the \textbf{$s$-shadow design} of $B$, denoted $B^{(s)}$, and the \textbf{$s$-shadow ideal} of $B$, denoted $M_B^{(s)}$, as 
\[B^{(s)} = \left\{{m\choose s} : m\in B\right\}\quad \text{and} \quad M_B^{(s)} = \left(\prod_{S \in \binom{m}{s}} x_S : m \in B\right).\]
\end{definition}

(Note that $M_B^{(s)}$ is generated by monomials of degree $k\choose s$, and it agrees with the block ideal $M_B$ when $s=1$.)

For example, if $n=6$, $B=\Bigl\{\{1,2,3\},\{2,3,6\}\Bigr\}$, and $s=2$, then \\
$B^{(s)} = \biggl\{\Bigl\{\{1,2\},\{2,3\},\{1,3\}\Bigr\},\Bigl\{\{2,3\},\{3,6\},\{2,6\}\Bigr\}\bigg\}$ and \\
$M_B^{(s)} =
 \Bigl(x_{\{1,2\}}x_{\{2,3\}}x_{\{1,3\}}, x_{\{2,3\}}x_{\{3,6\}}x_{\{2,6\}} \Bigr)$
 
 \textit{Note}: When we write the quotient $S/M_B^{(s)}$, it will be assumed that $S$ is the polynomial ring $S=K\left[x_S : S\in{{[n]} \choose s}\right]$ so that the quotient makes sense.
 
 \begin{proposition}\label{Prop 2'}
 Suppose that $([n],B)$ is an $S(t,k,n)$, $2 \leq t<k<n$, and let $p={n\choose t}/{k\choose t}$. Then $\betti_i\left( S/M_B^{(t-1)}\right) = {p\choose i}$, for all $i=0,\ldots,{k\choose{t-1}} - 1$.
 \end{proposition}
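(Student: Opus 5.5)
The plan is to reduce the statement to Lemma \ref{Lemma 5}, in the same way that Theorem \ref{Th 7} was deduced from Lemmas \ref{Lemma 5} and \ref{Lemma 6}. Set $N=\binom{k}{t-1}$. I will show two things: that $M_B^{(t-1)}$ is minimally generated by exactly $p$ monomials, and that $M_B^{(t-1)}$ is $(N-1)$-coverfree. The first part of Lemma \ref{Lemma 5} then gives $\betti_i(S/M_B^{(t-1)})=\binom{p}{i}$ for $i=0,\ldots,N-1$. That part of Lemma \ref{Lemma 5} does not need $p\geq r+1$, so no size condition on $p$ has to be checked.

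\textbf{Key intersection fact.} Let $m\neq m'$ be blocks of $B$. Then $\abs{m\cap m'}\leq t-1$. Otherwise $m\cap m'$ contains a $t$-block lying in two distinct blocks of $B$, contradicting the Steiner property. A $(t-1)$-subset common to $\binom{m}{t-1}$ and $\binom{m'}{t-1}$ must lie inside $m\cap m'$. So the shadows of $m$ and $m'$ share at most one element, namely $m\cap m'$ itself, and only when $\abs{m\cap m'}=t-1$.

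\textbf{Minimal generators.} Each generator $\prod_{S\in\binom{m}{t-1}}x_S$ is squarefree of degree $N$. Two distinct blocks give distinct generators, because their shadows share at most one of the $N\geq k\geq 3$ subsets. Hence $M_B^{(t-1)}$ has exactly $\abs B=p$ generators. None divides another, since they are distinct squarefree monomials of the same degree. So these $p$ monomials form the minimal generating set, and $\abs B=p$ by the counting remark in the preliminaries.

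\textbf{Coverfreeness.} Translate to sets as after Definition \ref{Def 3}. Take a block $m$ and at most $N-1$ other blocks $m_1,\ldots,m_r$. Each $\binom{m_j}{t-1}$ contains at most one element of $\binom{m}{t-1}$. Therefore the union $\bigcup_j\binom{m_j}{t-1}$ contains at most $r\leq N-1<N=\abs{\binom{m}{t-1}}$ elements of $\binom{m}{t-1}$, so $\binom{m}{t-1}$ is not covered. Thus $B^{(t-1)}$, and hence $M_B^{(t-1)}$, is $(N-1)$-coverfree, and Lemma \ref{Lemma 5} finishes the proof.

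There is no real obstacle here. The only point needing care is the intersection fact, which plays the role that Lemma \ref{Lemma 6} played for $t=2$. The right bound is ``at most one common $(t-1)$-subset'', not merely ``fewer than $t$ common points''.
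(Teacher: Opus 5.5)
Your proposal is correct and follows essentially the same route as the paper. Both arguments show that the $(t-1)$-shadows of two distinct blocks share at most one element, which gives $\bigl(\binom{k}{t-1}-1\bigr)$-coverfreeness, and then combine this with the $p$ minimal generators and apply Lemma \ref{Lemma 5}. Your explicit check that distinct blocks give distinct, mutually non-dividing generators is a detail the paper leaves implicit.
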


\begin{proof}
Suppose that $m_1,m_2$ are distinct generators of $M_B^{(t-1)}$ which share 2 variables (indexed by $(t-1)$-blocks). Then, when viewed as $k$-blocks of $B$, $m_1$ and $m_2$ have 2 $(t-1)$-blocks in common. This means that $\abs{m_1\cap m_2} \geq t$, which contradicts the fact that $([n],B)$ is an $S(t,k,n)$. Therefore, two generators of $M_B^{(t-1)}$ can share at most one variable. Since each generator is of degree ${k\choose {t-1}}$, it follows that $M_B^{(t-1)}$ is $r$-coverfree for $r={k\choose {t-1}} -1$. Also, since $\abs B = {n\choose t}/{k\choose t}$, we have that $M_B^{(t-1)}$ is minimally generated by ${n\choose t}/{k\choose t}$ monomials. Now, the result follows from Lemma \ref{Lemma 5}.
\end{proof}

\begin{lemma}\label{Lemma 3'}
Let $B\subseteq {{[n]}\choose 4}$ and suppose that $B^{(2)}$ is 4-coverfree. If $m,m'\in B$ and $\abs {m \cap m'} = 3$, then $m$ has a private $2$-block.
\end{lemma}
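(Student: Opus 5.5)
This is the analogue of Lemma \ref{Lemma 8} one level up, and I would argue it the same way, by contradiction. Write $m=\{a,b,c,d\}$ and $m'=\{a,b,c,e\}$, so that $m\cap m'=\{a,b,c\}$. The six $2$-blocks of $m$ split into two groups. The three $2$-blocks inside $\{a,b,c\}$, namely $\{a,b\},\{a,c\},\{b,c\}$, all lie in $m'$. The other three, $\{a,d\},\{b,d\},\{c,d\}$, contain $d$ and therefore do not lie in $m'$.

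Suppose that $m$ has no private $2$-block. Then each of $\{a,d\}$, $\{b,d\}$, $\{c,d\}$ is public. Hence for each of them we can choose a block $m_1,m_2,m_3\in B\setminus\{m\}$ with
\[
\{a,d\}\subseteq m_1,\qquad \{b,d\}\subseteq m_2,\qquad \{c,d\}\subseteq m_3 .
\]
None of these blocks equals $m'$, since $d\notin m'$. However, some of them may coincide with one another.

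Now pass to $B^{(2)}$. The element $\binom{m}{2}$ is contained in
\[
\binom{m'}{2}\cup\binom{m_1}{2}\cup\binom{m_2}{2}\cup\binom{m_3}{2}.
\]
The first set in this union covers $\{a,b\},\{a,c\},\{b,c\}$, and the other three cover $\{a,d\},\{b,d\},\{c,d\}$. This is a union of at most $4$ members of $B^{(2)}$ other than $\binom{m}{2}$. That contradicts the assumption that $B^{(2)}$ is $4$-coverfree, under the convention ``at most $r$ others'' of Definition \ref{Def 3}. Therefore some $2$-block of $m$ must be private.

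The one technical point is that the map $m\mapsto\binom{m}{2}$ must be injective, so that the sets $\binom{m_i}{2}$ and $\binom{m'}{2}$ really are members of $B^{(2)}$ distinct from $\binom{m}{2}$. This holds because a $4$-set is recovered as the union of its $2$-subsets, and the $m_i$ and $m'$ are distinct from $m$. With that checked, the ``at most'' form of coverfreeness absorbs any repetitions among $m_1,m_2,m_3$, so no further obstacle is expected.
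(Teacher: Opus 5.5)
Your proof is correct and follows the paper's argument essentially verbatim: assume $\{a,d\},\{b,d\},\{c,d\}$ are all public, pick covering blocks in $B\setminus\{m\}$, and observe that $\binom{m}{2}$ lies in the union of $\binom{m'}{2}$ and at most three other members of $B^{(2)}$, which contradicts $4$-coverfreeness. Your remark that $m\mapsto\binom{m}{2}$ is injective, so that these really are members of $B^{(2)}$ other than $\binom{m}{2}$, is a detail the paper leaves implicit, and you handle it correctly.
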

\begin{proof}
Let $a,b,c\in [n]$ be such that $m\cap m' = \{a,b,c\}$, and denote $m=\{a,b,c,d\}$. Thus, a private 2-block must be in $\Bigl\{ \{a,d\},\{b,d\},\{c,d\}\Bigr\}$. Suppose by means of contradiction that each of $\{a,d\},\{b,d\},\{c,d\}$ is public in $B^{(2)}$. Then, there are (not necessarily distinct) $4$-blocks $l, l', l'' \in B\setminus\{m\}$ containing $\{a,d\},\{b,d\},\{c,d\}$, respectively. This means that ${m\choose 2} \subseteq {m'\choose 2} \cup {l\choose 2} \cup {l'\choose 2} \cup {l''\choose 2}$, contradicting the fact that $B^{(2)}$ is 4-coverfree. Thus, $m$ must have some private $2$-block.
\end{proof}

\begin{theorem}\label{Th 4'}
Let $B\subseteq {{[n]}\choose 4}$ and suppose that $B^{(2)}$ is 4-coverfree. Then for $n> 10$, we have $\abs B \leq {n\choose 3}/4$, with equality holding if and only if $([n],B)$ is an $S(3,4,n)$.
\end{theorem}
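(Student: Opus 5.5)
The plan is to reduce to the Steiner triple system case by passing to the links of points and double counting. For each point $x\in[n]$ let $B_x=\{m\setminus\{x\}: x\in m\in B\}$, a collection of $3$-blocks of $[n]\setminus\{x\}$, which has $n-1>4$ points since $n>10$. Each $4$-block of $B$ lies in exactly four links, so
\[
4\abs B=\sum_{x\in[n]}\abs{B_x}.
\]
If every $B_x$ satisfies $\abs{B_x}\le (n-1)(n-2)/6$, then summing gives $\abs B\le n(n-1)(n-2)/24={n\choose 3}/4$. For the equality case, equality forces $\abs{B_x}=(n-1)(n-2)/6$ for every $x$. The equality part of Lemma \ref{Lemma 9}, with $n-1>4$, then says no two triples of $B_x$ share a pair. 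So every triple $\{x,y,z\}$ lies in at most one block of $B$. A count of triples, $4\abs B={n\choose 3}$, then shows each triple lies in exactly one block, i.e.\ $([n],B)$ is an $S(3,4,n)$. The converse is just $\abs B={n\choose 3}/{4\choose 3}$ for an $S(3,4,n)$.

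The key step is therefore the bound $\abs{B_x}\le (n-1)(n-2)/6$. I would not try to prove that $B_x$ is $2$-coverfree. Instead I would rerun the inductive argument of Lemma \ref{Lemma 9}, whose only ingredient was Lemma \ref{Lemma 8}: a public pair forces a private point. So suppose two triples of $B_x$ share a pair $\{y,z\}$. Then $m=\{x,y,z,d\}$ and $m'=\{x,y,z,d'\}$ in $B$ satisfy $\abs{m\cap m'}=3$, and Lemma \ref{Lemma 3'} gives $m$ a private $2$-block among $\{x,d\},\{y,d\},\{z,d\}$. If it is $\{x,d\}$, then $d$ is a private point of $m\setminus\{x\}$ in $B_x$. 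In that case I delete $d$ and that triple and induct exactly as in Lemma \ref{Lemma 9}, which gives the strict inequality that drives the equality case.

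The main obstacle is when the private $2$-block is $\{y,d\}$ or $\{z,d\}$ rather than $\{x,d\}$. Then the private pair does not live in the link of $x$, and the deletion cannot be done inside $B_x$ alone. My first attempt would be to perform the induction on $B$ itself rather than link by link. That is, I would use a potential
\[
\Phi(B)=\#\{\text{triples of }[n]\text{ covered by }B\},
\]
and show that whenever two blocks share a triple, one can remove a block $m$ carrying a private pair while losing at most $3$ covered triples. Since every triple of $m$ containing the private pair is covered only by $m$, this would give $\abs B\le \Phi(B)/4+(\text{deficit})$ with a deficit that must vanish. An alternative is to apply Lemma \ref{Lemma 3'} symmetrically to both $m$ and $m'$ and choose $x$ among the three shared points so that the private pair contains $x$. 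Here I would use the $4$-coverfree hypothesis to rule out the configuration where all three pairs $\{x,d\}$ across choices of $x$ are public. The threshold $n>10$ should enter only in making the final inequality strict, as $n>4$ and $n>(k-1)^2$ did in Lemma \ref{Lemma 9} and Theorem \ref{Theorem C}.
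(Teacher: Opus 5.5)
There is a genuine gap. The step you call key, $\abs{B_x}\le (n-1)(n-2)/6$ for every $x$, is false under the hypothesis, so no rerun of Lemma \ref{Lemma 9} can prove it.

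Fix two points $x,y$ and take $B=\bigl\{\{x,y,i,j\}: i,j\in[n]\setminus\{x,y\}\bigr\}$. Each block contains the pair $\{i,j\}$, which lies in no other block, so $B^{(2)}$ is $r$-coverfree for every $r$. Yet $\abs{B_x}=\binom{n-2}{2}>(n-1)(n-2)/6$ whenever $n>4$; for $n=11$ this is $36>15$. The theorem survives because $\abs B=\binom{n-2}{2}\le\binom{n}{3}/4$ for $n\ge 9$. Links can be wildly unbalanced, and only their sum is controlled.

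This family is exactly your ``main obstacle'' configuration: the private pair $\{i,j\}$ of $\{x,y,i,j\}$ does not contain $x$. So the obstacle is real, not an artifact of the method. Your equality argument fails for the same reason. It also invokes the equality clause of Lemma \ref{Lemma 9}, which presupposes that $B_x$ is $2$-coverfree, and you rightly concede you cannot show that.

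The same family refutes your expectation that $n>10$ only makes an inequality strict. For $5\le n\le 8$ it has $\abs B>\binom{n}{3}/4$. For $n=9$ it attains $\abs B=21=\binom{9}{3}/4$ without being an $S(3,4,9)$. So a correct proof must contain a term growing with $n$ that pays for overlapping blocks.

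Your potential-function fallback has no such term. It only records that each deletion costs two or three covered triples. It never compares the number of deletions with the number of uncovered triples, so the ``deficit that must vanish'' is asserted rather than derived.

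The missing idea is a global count of the uncovered triples produced by private pairs, which is how the paper argues. Let $\lambda(l)$ be the number of blocks containing $l$, $U$ the number of uncovered $3$-blocks, $E=\sum_l\max(\lambda(l)-1,0)$, and $P$ the number of private $2$-blocks. Then $4\abs B=\binom{n}{3}-U+E$.

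\begin{itemize}
\item By Lemma \ref{Lemma 3'}, a block with a public triple has a private pair $\{a,b\}$. Only its two triples not containing $\{a,b\}$ can then be public, so $E\le 2P$.
\item Each private pair $\{a,b\}\subseteq m$ lies in the $n-4$ uncovered triples $\{a,b,z\}$ with $z\notin m$. Each triple contains only three pairs, so $U\ge P(n-4)/3$.
\end{itemize}

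Hence $4\abs B\le\binom{n}{3}-P(n-10)/3$, which gives the bound for $n>10$. Equality forces $P=0$, hence $U=E=0$, i.e.\ an $S(3,4,n)$.
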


\begin{proof}
The backward direction is trivial; if $([n], B)$ is an $S(3,4,n)$, two blocks in $B$ may intersect in at most one 2-block; each block has six 2-blocks, so $B^{(2)}$ is in fact 5-coverfree. By counting how many 3-blocks each 4-block in $B$ captures, we also have $4\abs B = \binom{n}{3} \implies \abs B = \binom{n}{3}/4$.

Now we will prove the inequality. For each $l\subseteq [n]$, define $\lambda(l) = \abs{\{m\in B: l\subseteq m\}}$. Also define 
\[U = \abs*{\left\{ l\in \binom{[n]}{3}:\lambda(l) = 0 \right\}}, \qquad E = \sum\limits_{l \in \binom{[n]}{3}} \max(\lambda(l)-1,0).\]
That is, $U$ is the number of 3-blocks which are uncovered, and $E$ is the number of times that 3-blocks are excessively covered. Note that $\sum\limits_{l\in \binom{[n]}{3}} \lambda(l) - E = {n\choose 3} - U$. Also notice that a 4-block in $B$ covers four different 3-blocks, which implies that $4 \abs B = \sum\limits_{l\in \binom{[n]}{3}}\lambda(l) = {n\choose 3} - U+E$.

Now, define $P = \abs*{\left\{ e\in \binom{[n]}{2}:\lambda(e) = 1 \right\}} $ (that is, $P$ is the number of private 2-blocks).

By Lemma \ref{Lemma 3'}, if a 4-block in $B$ has a public 3-block, then it has a private 2-block. The contrapositive is of use to us: if a 4-block $m\in B$ has no private 2-block, then every 3-block of $m$ is private.

Now, we enumerate over private 2-blocks; say, $\{x,y\}$ is private to $m=\{a,b,x,y\}\in B$. Since $\{a,b,x\}$ and $\{a,b,y\}$ are the only 3-blocks of $m$ not containing the pair $\{x,y\}$, these are the only possible public 3-blocks of $m$.

In other words, a 4-block in $B$ containing a private 2-block has at most 2 public 3-blocks. We can bound the number of pairs $(A, X)$ where $A \in B$ and $X$ is a public 3-block of $A$: it is at most $2P$.

Thus,
\[E\leq \sum\limits_{\substack{l\in \binom{[n]}{3}\\ \lambda(l) \geq 2}} \lambda(l)\leq 2P\]

We need one more inequality. For each private $\{x,y\}$ of a 4-block $m$, any choice of a third point $z\in [n]\setminus m$ determines an uncovered 3-block. Thus each private 2-block is contained in exactly $n-4$ uncovered 3-blocks. Hence, each of the $P$ private 2-blocks can be used to construct $n-4$ uncovered 3-blocks. And given that an uncovered 3-block has at most 3 private 2-blocks which could be used to construct it in this way, we have that the $P$ private 2-blocks define at least $P(n-4)/3$ distinct uncovered 3-blocks. That is, $U\geq P(n-4)/3$.

Now, we combine our inequalities:

\[4\abs B = {n\choose 3} - U + E\leq {n\choose 3} - \dfrac{P(n-4)}{3} +2P = {n\choose 3} - \dfrac{P(n-10)}{3}.\]

In particular, for $n > 10$, we have $\abs B \leq {n\choose 3}/4$, with equality holding only if $P=0$.

Of course, if $\abs B = \binom{n}{3}/4$ and $P=0$, that further implies $E=U=0$. Then each 3-block is covered exactly once; we conclude in this case that $([n],B)$ is an $\mathrm{SQS}(n)$.
\end{proof}

\begin{corollary}\label{Cor 5'}
Suppose that $B\subseteq \binom{[n]}{4}$, with $n>10$, and let $p={n\choose 3}/4$. Then, $([n],B)$ is an $S(3,4,n)$ if and only if $\betti_i(S/M_B^{(2)}) = \binom{p}{i}$, for all $i=0,\dots,5$.
\end{corollary}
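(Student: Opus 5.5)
Both directions reduce to results already in place, with Lemma \ref{Lemma 5} translating Betti numbers into coverfreeness. One point needs care first: the generators of $M_B^{(2)}$ should be in bijection with $B$. Two distinct $4$-blocks $m\neq m'$ have distinct $2$-shadows, because a $4$-block is the union of its $2$-subsets. Hence $\binom{m}{2}\neq\binom{m'}{2}$, and since all these monomials have the same degree $6$, none divides another. So $M_B^{(2)}$ is minimally generated by exactly $\abs B$ monomials, and $\betti_1(S/M_B^{(2)})=\abs B$.

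For the forward direction, assume $([n],B)$ is an $S(3,4,n)$ and apply Proposition \ref{Prop 2'} with $t=3$, $k=4$. The hypothesis $2\le t<k<n$ holds since $n>10$. We get $p=\binom{n}{3}/\binom{4}{3}=\binom{n}{3}/4$ and $\betti_i(S/M_B^{(2)})=\binom{p}{i}$ for $i=0,\dots,\binom{4}{2}-1=5$, which is exactly the claim.

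For the converse, assume $\betti_i(S/M_B^{(2)})=\binom{p}{i}$ for $i=0,\dots,5$. From $i=1$ we get $\abs B=p=\binom{n}{3}/4$. By Lemma \ref{Lemma 5} with $r=5$, the ideal $M_B^{(2)}$ is $5$-coverfree. By the dictionary between monomial and set coverfreeness, replacing each generator by its set of variables, the collection $B^{(2)}$ is $5$-coverfree, hence $4$-coverfree. Theorem \ref{Th 4'} applies because $n>10$: it gives $\abs B\le \binom{n}{3}/4$, with equality if and only if $([n],B)$ is an $S(3,4,n)$. Equality holds, so $([n],B)$ is an $\mathrm{SQS}(n)$.

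I do not expect a real obstacle, since everything is a direct application of earlier results. The only delicate step is the bookkeeping for $\betti_1$, namely checking that the minimal generators of $M_B^{(2)}$ correspond bijectively to $B$, which the first paragraph handles. Note also that the hypothesis $p\ge r+1$ in the second half of Lemma \ref{Lemma 5} is not needed, since we use its first statement, which assumes all the Betti numbers for $i\le 5$.
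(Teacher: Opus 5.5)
Your proposal is correct and follows the paper's proof. The forward direction is Proposition \ref{Prop 2'}. For the converse you read off $\abs B=p$ from $\betti_1$, obtain $5$-coverfreeness (hence $4$-coverfreeness) of $B^{(2)}$ from Lemma \ref{Lemma 5}, and conclude with Theorem \ref{Th 4'}. Your extra check that the minimal generators of $M_B^{(2)}$ correspond bijectively to $B$ fills in a step the paper leaves implicit.
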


\begin{proof}
The forward implication follows from Proposition \ref{Prop 2'}. For the converse, suppose that $S/M_B^{(2)}$ has the Betti numbers given in the statement of this corollary. Then, we must have that $\abs B = \binom{n}{3}/4$ and $B^{(2)}$ is 5-coverfree (and hence, 4-coverfree). Then, by Theorem \ref{Th 4'}, $([n], B)$ is an $S(3,4,n)$.
\end{proof}

\section{Characterization of $S(t,k,n)$}

The question of whether Steiner systems $S(t,k,n)$ exist for large values of $t$ was posed by various mathematicians of the 1800s, and was answered in the affirmative by Peter Keevash in 2014 [Ke]. In this section we characterize such systems. We start with a result analogous to Lemma \ref{Lemma 3'}.

\begin{lemma}\label{Lemma 6'}
For $2 \leq t < k < n$, let $B\subseteq \binom{[n]}{k}$ and suppose that $B^{(t-1)}$ is $\left(\binom{k}{t-1} - t+1\right)$-coverfree. If $m,m'$ are distinct $k$-blocks of $B$ and $\abs{m\cap m'} \geq t$, then $m$ has a private $(t-1)$-block.
\end{lemma}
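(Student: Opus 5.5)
We argue exactly as in Lemma \ref{Lemma 3'}, by contradiction and a covering count. Fix $m,m'\in B$ distinct with $\abs{m\cap m'}\geq t$, and pick a $t$-subset $T\subseteq m\cap m'$. Here ``private $(t-1)$-block'' means a $(t-1)$-subset of $m$ contained in no other block of $B$, i.e.\ a variable of the generator $\binom{m}{t-1}$ appearing in no other generator of $M_B^{(t-1)}$. Suppose, to the contrary, that every $(t-1)$-subset of $m$ lies in some block of $B\setminus\{m\}$.

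First, all $(t-1)$-subsets of $m$ contained in $T$ are covered by the single set $\binom{m'}{t-1}$. Since $T\subseteq m'$, every $(t-1)$-subset of $T$ lies in $\binom{m'}{t-1}$, and there are $\binom{t}{t-1}=t$ of them.

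Second, the remaining $(t-1)$-subsets of $m$ number $\binom{k}{t-1}-t$. For each such subset $X$, choose (by assumption) a block $l_X\in B\setminus\{m\}$ with $X\subseteq l_X$. Then
\[
\binom{m}{t-1}\subseteq \binom{m'}{t-1}\cup\bigcup_{X}\binom{l_X}{t-1},
\]
a union of at most $1+\binom{k}{t-1}-t=\binom{k}{t-1}-t+1$ members of $B^{(t-1)}$, all different from $\binom{m}{t-1}$.

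This contradicts the assumption that $B^{(t-1)}$ is $\left(\binom{k}{t-1}-t+1\right)$-coverfree, since Definition \ref{Def 3} uses ``at most $r$ others'' and repetitions among the $l_X$ and $m'$ only shrink the count. Hence some $(t-1)$-block of $m$ is private.

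One bookkeeping point needs a sentence. The members of $B^{(t-1)}$ are distinct, because $\binom{m}{t-1}$ determines $m$ when $t-1\geq 1$ (the union of its sets is $m$). So ``others'' genuinely means members other than $\binom{m}{t-1}$, and $m'\neq m$ gives $\binom{m'}{t-1}\neq\binom{m}{t-1}$. The only step needing care is this count $t$ of subsets absorbed by $m'$. Taking a $t$-subset of the intersection, rather than the whole intersection, is what makes the bound match the hypothesis exactly.
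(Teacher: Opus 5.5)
Your proof is correct and matches the paper's: both cover the (at least $t$) $(t-1)$-subsets of $m\cap m'$ by $\binom{m'}{t-1}$, and cover each of the at most $\binom{k}{t-1}-t$ remaining $(t-1)$-blocks of $m$ by some other block of $B$. This contradicts $\left(\binom{k}{t-1}-t+1\right)$-coverfreeness. Restricting to a $t$-subset $T$ of the intersection is only cosmetic; the paper uses the whole intersection, which gives a count at most as large and works equally well.
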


\begin{proof}
By means of contradiction, suppose that $m$ has no private $(t-1)$-blocks. Note that $\abs*{\binom{m\cap m'}{t-1}} \geq t$, so

\[\abs*{\binom{m}{t-1} \setminus \binom{m'}{t-1}} = \abs*{\binom{m}{t-1}} - \abs*{\binom{m'\cap m}{t-1}} \leq \binom{k}{t-1}-t. \]

If $l$ is a $(t-1)$-block of $\binom{m}{t-1}\setminus \binom{m'}{t-1}$, there exists $m_l\in B\setminus \{m\}$ such that $l\subseteq m_l$, as $m$ has no private $(t-1)$-blocks.

Hence,

\[\binom{m}{t-1}\subseteq \binom{m'}{t-1}\cup \bigcup\limits_{l\in \binom{m}{t-1}\setminus \binom{m'}{t-1}} \binom{m_l}{t-1},\]

which means that $\binom{m}{t-1}$ is covered by the union of at most $\binom{k}{t-1} - t+1$ elements in $B^{(t-1)}$, contradicting $\left(\binom{k}{t-1} - t+1\right)$-coverfreeness.
\end{proof}

\begin{theorem}\label{Th 7'}
Let $B\subseteq\binom{[n]}{k}$ and suppose that $B^{(t-1)}$ is $\left(\binom{k}{t-1} - t+1\right)$-coverfree. Then for $n>k+t\left(\binom{k}{t} -k+t-1\right)$, we have that $\abs B \leq \binom{n}{t}/\binom{k}{t}$, with equality holding only if $([n],B)$ is an $S(t,k,n)$.
\end{theorem}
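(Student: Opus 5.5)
The plan is to repeat the double-counting argument of Theorem \ref{Th 4'}, with Lemma \ref{Lemma 6'} taking the place of Lemma \ref{Lemma 3'}. For $l\subseteq[n]$ set $\lambda(l)=\abs{\{m\in B: l\subseteq m\}}$. Define
\[U=\abs*{\left\{l\in\tbinom{[n]}{t}:\lambda(l)=0\right\}},\qquad E=\sum_{l\in\binom{[n]}{t}}\max(\lambda(l)-1,0),\]
and let $P$ be the number of private $(t-1)$-blocks, i.e.\ those $e\in\binom{[n]}{t-1}$ with $\lambda(e)=1$. Each $k$-block covers $\binom{k}{t}$ distinct $t$-blocks, so counting incidences gives
\[\binom{k}{t}\abs B=\sum_{l}\lambda(l)=\binom{n}{t}-U+E.\]
The goal is to bound $E$ from above and $U$ from below, both in terms of $P$.

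\textbf{Upper bound on $E$.} If a $k$-block $m\in B$ contains a public $t$-block, then some $m'\neq m$ has $\abs{m\cap m'}\geq t$. By Lemma \ref{Lemma 6'}, which applies because of the coverfree hypothesis, $m$ then has a private $(t-1)$-block $e$. Every $t$-block of $m$ that contains $e$ is private, since any block containing it contains $e$ and hence equals $m$. There are $k-t+1$ such $t$-blocks. So $m$ has at most $c:=\binom{k}{t}-k+t-1$ public $t$-blocks.

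Charge each block $m$ that has a public $t$-block to one of its private $(t-1)$-blocks. Distinct blocks receive distinct private $(t-1)$-blocks, because a private $(t-1)$-block lies in only one block. Hence the number of pairs $(m,X)$ with $X$ a public $t$-block of $m$ is at most $cP$, and
\[E\leq\sum_{\lambda(l)\geq2}\lambda(l)\leq cP.\]

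\textbf{Lower bound on $U$.} Let $e$ be a private $(t-1)$-block of $m$, and let $z\in[n]\setminus m$. Then $e\cup\{z\}$ is uncovered: any block containing it contains $e$, so it is $m$, but $z\notin m$. Each private $(t-1)$-block therefore yields $n-k$ uncovered $t$-blocks. A given $t$-block has only $t$ subsets of size $t-1$, so it arises from at most $t$ private $(t-1)$-blocks. Thus $U\geq P(n-k)/t$.

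\textbf{Combining.} Putting the two bounds together,
\[\binom{k}{t}\abs B\leq\binom{n}{t}-P\left(\frac{n-k}{t}-c\right).\]
The coefficient $\frac{n-k}{t}-c$ is positive exactly when $n>k+t\left(\binom{k}{t}-k+t-1\right)$, which is the hypothesis. This gives $\abs B\leq\binom{n}{t}/\binom{k}{t}$, and equality forces $P=0$.

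With $P=0$, no block has a private $(t-1)$-block. By Lemma \ref{Lemma 6'}, no two blocks then share $t$ points, so $E=0$. The incidence identity at equality then gives $U=0$. So every $t$-block lies in exactly one block, and $([n],B)$ is an $S(t,k,n)$.

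The step needing the most care is the bound $E\leq cP$. One must make sure each over-covered incidence is charged to a block that has a private $(t-1)$-block, and that the charging map from blocks to private $(t-1)$-blocks is injective; both points are handled by the uniqueness of the block containing a private set.
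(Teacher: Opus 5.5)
Your proposal is correct and follows the paper's proof essentially step for step. It uses the same quantities $\lambda$, $U$, $E$, $P$, the bound $E\le P\bigl(\binom{k}{t}-k+t-1\bigr)$ obtained via Lemma~\ref{Lemma 6'}, the bound $U\ge P(n-k)/t$, and the same final combination, while making explicit the injectivity of the charging from blocks with public $t$-blocks to private $(t-1)$-blocks, which the paper only asserts.
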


\begin{proof}
For each $l\subseteq [n]$, let $\lambda(l) = \abs{\{m\in B:l\subseteq m\}}$. Now define 
\[
U = \abs*{\left\{ l\in \binom{[n]}{t}:\lambda(l) = 0 \right\}}, 
\qquad 
E = \sum\limits_{l \in \binom{[n]}{t}} \max(\lambda(l)-1,0).
\]
Finally, let $P = \abs*{\Bigl\{ e\in \binom{[n]}{t-1}:\lambda(e) = 1 \Bigr\}}$.

If $e$ is a private $(t-1)$-block and $m\in B$ is the unique $k$-block containing $e$, then the $t$-blocks in ${m\choose t}$ that contain $e$ are of the form $e\cup\{x\}$, with $x\in m\setminus e$. There are exactly $k-t+1$ of these. It follows that $m$ has ${k\choose t}-(k-t+1)$ $t$-blocks which do not contain $e$ and thus, $m$ has at most ${k\choose t}-(k-t+1)$ public $t$-blocks.

On the other hand, if $m\in B$ has no private $(t-1)$-blocks, then every $t$-block of $m$ is private by the contrapositive of Lemma \ref{Lemma 6'}.

Therefore, the number of $k$-blocks $m\in B$ having public $t$-blocks is at most the number of private $(t-1)$-blocks, that is, $P$.

We conclude that the number of pairs $(m, l)$ where $m \in B$, and $l$ is a public $t$-block of $m$ is at most $P\left({k\choose t}-(k-t+1)\right)$. That is,
\[E\leq \sum\limits_{\substack{l\in \binom{[n]}{t} \\ \lambda(l) \geq 2}} \lambda(l)\leq P\left({k\choose t}-(k-t+1)\right).\]
						
We need one more inequality. If $e$ is a private $(t-1)$-block and $m\in B$ is the unique $k$-block containing $e$, then each $t$-block of the form $e\cup\{x\}$, with $x\in [n]\setminus m$ is uncovered. There are $n-k$ of these. Counting over all private $(t-1)$-blocks $e$, we obtain $P(n-k)$ uncovered $t$-blocks. Some uncovered $t$-block may be repeated, of course, but since a $t$-block has exactly $t$ different $(t-1)$-blocks, the number of distinct uncovered $t$-blocks must be at least $P(n-k)/t$. That is, $U\geq P(n-k)/t$. 
						
Combining these bounds, we obtain
\begin{align*}
    {k\choose t} \abs B &= {n\choose t} - U + E\leq {n\choose t} - \frac{P(n-k)}{t} + P\left({k\choose t}-k+t-1)\right) \\
    &= {n\choose t} - P\left(\frac{n-k}{t} - {k\choose t} + k - t + 1\right)
\end{align*}

Hence, for $n>k+t\left({k\choose t}-k+t-1\right)$, we have $\binom{k}{t} \abs B \leq \binom{n}{t}$, which means that $\abs B \leq \binom{n}{t}/\binom{k}{t}$, with equality only if $P=0$. But if $P=0$ and $\abs B = \binom{n}{t}/\binom{k}{t}$, then in fact $E = U = 0$. Thus, each $t$-block is covered exactly once, so we conclude in this case that $([n],B)$ is an $S(t,k,n)$.
\end{proof}

\begin{corollary}\label{Cor 8'}
Suppose that $B\subseteq {{[n]}\choose k}$, with $n>k+t\left(\binom{k}{t} - k+t-1\right)$ and let $p=\binom{n}{t}/\binom{k}{t}$. Then, $([n],B)$ is an $S(t,k,n)$ if and only if $\betti_i(S/M_B^{(t-1)}) = {p\choose i}$, for all $i=0,\ldots,{k\choose {t-1}} - 1$.
\end{corollary}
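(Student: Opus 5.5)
The plan mirrors the proof of Corollary \ref{Cor 5'}, with Proposition \ref{Prop 2'} and Theorem \ref{Th 7'} taking the places of their $t=3,k=4$ specializations. Set $r=\binom{k}{t-1}-1$.

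For the forward implication, assume $([n],B)$ is an $S(t,k,n)$. Proposition \ref{Prop 2'} gives $\betti_i(S/M_B^{(t-1)})=\binom{p}{i}$ for $i=0,\ldots,r$. We need $2\le t<k<n$ here, and $n>k$ certainly holds under the hypothesis, since $\binom{k}{t}-k+t-1\ge 0$.

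For the converse, suppose $\betti_i(S/M_B^{(t-1)})=\binom{p}{i}$ for $i=0,\ldots,r$. First I would pin down the number of minimal generators. Distinct $k$-blocks $m\neq m'$ give distinct squarefree monomials of the same degree $\binom{k}{t-1}$: some $(t-1)$-subset of $m$ is not contained in $m'$, because the $(t-1)$-subsets of a block cover it when $t-1\ge1$. Equal-degree distinct squarefree monomials cannot divide one another, so every block contributes a minimal generator and $M_B^{(t-1)}$ is minimally generated by exactly $\abs B$ monomials. Hence $\abs B=\betti_1=p$.

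Next, Lemma \ref{Lemma 5}, applied to the generating set of size $p$, gives that $M_B^{(t-1)}$ is $r$-coverfree. Equivalently, the collection $B^{(t-1)}$ is $r$-coverfree, and therefore also $\bigl(\binom{k}{t-1}-t+1\bigr)$-coverfree, since $\binom{k}{t-1}-t+1\le r$ when $t\ge 2$.

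Finally, Theorem \ref{Th 7'} applies under the given bound on $n$. It gives $\abs B\le p$, with equality only if $([n],B)$ is an $S(t,k,n)$. Since $\abs B=p$, the pair $([n],B)$ is an $S(t,k,n)$.

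The only step needing care is the generator count. It requires the correspondence between $B$ and the minimal generators of $M_B^{(t-1)}$ to be injective, so that $\betti_1$ really equals $\abs B$. This is the point where $t\ge2$ is used, through the covering of a block by its $(t-1)$-subsets. Everything else is a direct citation of earlier results.
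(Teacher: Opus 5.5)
Your proposal is correct and follows the paper's proof. The forward direction uses Proposition~\ref{Prop 2'}; the converse uses $\betti_1$ and Lemma~\ref{Lemma 5} to get $\abs B = p$ and $\bigl(\binom{k}{t-1}-1\bigr)$-coverfreeness, weakens this to $\bigl(\binom{k}{t-1}-t+1\bigr)$-coverfreeness, and finishes with Theorem~\ref{Th 7'}. Your checks that distinct blocks give distinct, mutually non-dividing generators (so $\betti_1 = \abs B$) and that the bound on $n$ forces $n>k$ fill in details the paper leaves implicit.
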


\begin{proof}
The forward implication follows from Proposition \ref{Prop 2'}. For the converse, suppose that $S/M_B^{(t-1)}$ has the Betti numbers given in the statement of this corollary. Then, we must have that $\abs B = {n\choose t}/{k\choose t}$ and $B^{(t-1)}$ is $\left({k\choose {t-1}}-1\right)$-coverfree (and hence, $\left({k\choose {t-1}}-t+1\right)$-coverfree). Then, by Theorem \ref{Th 7'}, $([n],B)$ is an $S(t,k,n)$.
\end{proof}

\bigskip

\noindent \textbf{Acknowledgements}: This work was supported by NSF grant DMS-2244020. The authors are grateful to the Department of Mathematics and Statistics at California State University, Chico for providing the setting where the Research Experiences for Undergraduates (REU) program took place.

\end{document}